\def\blfootnote{\xdef\@thefnmark{}\@footnotetext}
\newtheorem{theorem}{Theorem}[section]
\newtheorem{question}[theorem]{Question}
\newtheorem{lemma}[theorem]{Lemma}
\newtheorem{proposition}[theorem]{Proposition}
\newtheorem{corollary}[theorem]{Corollary}
\newtheorem{hyp}[theorem]{Hypothesis}
\newtheorem{conj}[theorem]{Conjecture}
\theoremstyle{definition}
\newtheorem{remark}[theorem]{Remark}
\newtheorem*{definition*}{Definition}
\newcommand{\bt}{\begin{theorem}}
\newcommand{\et}{\end{theorem}}
\newcommand{\bc}{\begin{corollary}}
\newcommand{\ec}{\end{corollary}}
\newcommand{\bpr}{\begin{proposition}}
\newcommand{\epr}{\end{proposition}}
\newcommand{\be}{\begin{equation}}
\newcommand{\ee}{\end{equation}}
\newcommand{\bp}{\begin{proof}}
\newcommand{\ep}{\end{proof}}
\newcommand{\bconj}{\begin{conj}}
\newcommand{\econj}{\end{conj}}
\newcommand{\bl}{\begin{lemma}}
\newcommand{\el}{\end{lemma}}
\newcommand{\bh}{\begin{hyp}}
\newcommand{\eh}{\end{hyp}}
\newcommand{\br}{\begin{remark}}
\newcommand{\er}{\end{remark}}
\let\leq=\leqslant
\let\geq=\geqslant
\numberwithin{equation}{section}
\newcommand{\ed}{\end{document}}
\begin{document}

\title[Engel-type subgroups]{Engel-type subgroups and length parameters  of finite groups}

\author{E. I. Khukhro}

\address{School of Mathematics and Physics, University of Lincoln, Lincoln, LN6 7TS, U.K., and\\ Sobolev Institute of Mathematics, Novosibirsk, 630090, Russia.}
 \email{khukhro@yahoo.co.uk}

\author{P. Shumyatsky}

\address{Department of Mathematics, University of Brasilia, Brasilia, DF~70910-900, Brazil}

\email{pavel@unb.br}

\keywords{Finite groups, nonsoluble length, generalized Fitting height, commutator subgroup}
\subjclass{20D25, 20D45, 20E34}

\begin{abstract}
Let $g$ be an element of a finite group $G$. For a positive integer $n$, let $E_n(g)$ be the subgroup generated by all commutators  $[...[[x,g],g],\dots ,g]$ over $x\in G$,  where $g$ is repeated $n$ times. By Baer's theorem, if $E_n(g)=1$, then  $g$ belongs to the Fitting subgroup $F(G)$. We generalize this theorem in terms of certain  length parameters of $E_n(g)$.  For soluble $G$ we prove that if, for some $n$, the Fitting height of $E_n(g)$ is equal to $k$, then $g$ belongs to the $(k+1)$th Fitting subgroup $F_{k+1}(G)$. For nonsoluble $G$ the results are in terms of  nonsoluble length and generalized Fitting height. The generalized Fitting height $h^*(H)$ of a finite group $H$ is the least number $h$ such that $F^*_h(H)=H$, where  $F^*_0(H)=1$, and $F^*_{i+1}(H)$ is the inverse image of the generalized Fitting subgroup $F^*(H/F^*_{i}(H))$. Let $m$ be the number  of prime factors of $|g|$ counting multiplicities. It is proved that if, for some $n$, the  generalized Fitting height of $E_n(g)$ is equal to $k$, then $g$ belongs to $F^*_{f(k,m)}(G)$, where $f(k,m)$ depends only on $k$ and $m$. The nonsoluble length~$\lambda (H)$ of a finite group~$H$ is defined as the minimum number of nonsoluble factors in a normal series each of whose factors either is soluble or is a direct product of nonabelian simple groups. It is proved that if  $\lambda (E_n(g))=k$, then $g$ belongs to a normal subgroup whose nonsoluble length is bounded in terms of $k$ and $m$. We also state conjectures of stronger results independent of $m$ and show that these conjectures reduce to a certain question about automorphisms of direct products of finite simple groups.
\end{abstract}

\maketitle

\blfootnote{The first
author was supported  by the Russian Science Foundation, project no. 14-21-00065,
and the second
by the Conselho Nacional de Desenvolvimento Cient\'{\i}fico e Tecnol\'ogico (CNPq), Brazil. The first author thanks  CNPq-Brazil and the University of Brasilia for support and hospitality that he enjoyed during his visits to Brasilia.}

\section{Introduction}
By the well-known theorem of Baer \cite[Satz~III.6.15]{hup}, an element $g$
of a finite group $G$ belongs to the Fitting subgroup $F(G)$ if and only
if it is a left-Engel element, that is, if $[x,g,g,\dots , g]=1$ for all $x\in G$,
where $g$ is repeated in the commutator  sufficiently many times.
(Throughout the paper, we use the left-normed simple commutator notation
$[a_1,a_2,a_3,\dots ,a_r]=[...[[a_1,a_2],a_3],\dots ,a_r]$.) In this paper
we generalize this result using the subgroups
$$
E_n(g)=\langle [x,\underbrace{g,\dots ,g}_{n}]\mid x\in G\rangle.
$$

 Recall that the Fitting series is defined  starting from  $F_0(G)=1$, and then by induction $F_{k+1}(G)$ is  the inverse image of $F(G/F_k(G))$. Our first result is about soluble groups.

\begin{theorem}\label{t-sol}
Let $g$ be an element of a finite soluble group $G$, and  $n$ a positive integer.
If  the Fitting height of $E_n(g)$ is equal to $k$, then $g$ belongs to  $ F_{k+1}(G)$.
\end{theorem}

For nonsoluble finite groups we prove similar results in terms of the nonsoluble length and generalized Fitting height of $E_n(g)$.
We recall the relevant definitions. The generalized Fitting subgroup $F^*(G)$ is the product of the Fitting subgroup $F(G)$ and all subnormal quasisimple subgroups (here a group is quasisimple if it is equal to its derived subgroup and its  quotient by the centre is a non-abelian simple group). Then the \textit{generalized Fitting series} of $G$ is defined starting from  $F^*_0(G)=1$, and then by induction $F^*_{i+1}(G)$ is the inverse image of $F^*(G/F^*_{i}(G))$. The least number $h$ such that $F^*_h(G)=G$ is naturally defined to be  the \textit{generalized Fitting height} $h^*(G) $ of $G$. Clearly, if $G$ is soluble, then $h^*(G)=h(G)$ is the ordinary Fitting height of $G$.

\begin{theorem}\label{t2}
Let $m$ and $n$ be  positive integers, and let $g$ be an element of a finite group $G$ whose order $|g|$ is equal to the product of $m$ primes counting multiplicities.  If the generalized Fitting height of $E_n(g)$ is equal to $k$, then $g$ belongs to  $ F^*_{j}(G)$ for some $j\leq ((k+1)m(m+1)+2)(k+3)/2$.
\end{theorem}

This theorem follows from Theorem~\ref{t-sol} on soluble groups and Theorem~\ref{t3} below concerning another length parameter. Namely,  the  \textit{nonsoluble length} $\lambda (G)$  of a finite group $G$ is defined as the minimum number of nonsoluble factors in a normal series each of whose factors either is soluble or is a direct product of nonabelian simple groups. (In particular, the group is soluble if and only if its nonsoluble length is $0$.) Bounds for the nonsoluble length or/and generalized Fitting height of a finite group $G$ greatly facilitate using the classification of finite simple groups (and are  themselves often obtained by using the classification). Most notably such bounds were used in the reduction of the Restricted Burnside Problem to soluble and nilpotent groups in  the Hall--Higman paper \cite{ha-hi}.  Such bounds  also find applications in the study of profinite groups. Examples include Wilson's reduction of the problem of local finiteness of periodic profinite groups to pro-$p$ groups in \cite{wil83} and our recent paper  \cite{khu-shu} on similar problems. (Both the Restricted Burnside Problem and the problem of local finiteness of periodic profinite groups were solved by Zelmanov \cite{zel90, zel91, zel92}.)

Similarly to the generalized Fitting series, we can define terms of the `upper nonsoluble series': let  $R_i(G)$ be the maximal normal subgroup of $G$ that has nonsoluble length $i$ (so that, in particular, $R_0(G)$ is the soluble radical of $G$).

\begin{theorem}\label{t3}
Let $m$ and $n$ be  positive integers, and let $g$ be an element of a finite group $G$ whose order $|g|$ is equal to the product of $m$ primes counting multiplicities.  If the nonsoluble length  of $E_n(g)$ is equal to $k$,
then $g$ belongs to  $ R_j(G)$ for some $j\leq {(k+1)m(m+1)/2}$.
\end{theorem}

Note that $E_n(g)$ is \textit{not} a subgroup of the type $[...[[G,g],g],\dots ,g]$  formed by taking successive commutator subgroups, which is subnormal.
But $E_n(g)$ is not subnormal in general.

Our results on nonsoluble groups depend on the classification of finite simple groups in so far as the validity of the Schreier
conjecture on solubility of the group of outer automorphisms of a finite simple group.
In Theorems~\ref{t2} and \ref{t3}, nonsoluble length and generalized Fitting height have  bounds depending both on the parameters of $E_n(g)$ and on the number of prime factors in the order of $g$. We conjecture that stronger results may hold, not depending on the order of $g$. Namely, we conjecture that if, for an element $g$ of a finite group $G$
the nonsoluble length of $E_n(g)$ is equal to $k$, then $g$
belongs to  $ R_{k}(G)$. We also conjecture that if  the generalized Fitting height of $E_n(g)$ is equal to $k$, then $g$
belongs to  $ F^*_{k+1}(G)$.
These conjectures can be reduced to certain questions about automorphisms of direct products of nonabelian simple groups.
The questions, though, seem rather difficult and remain open. We discuss them in \S\,\ref{s-rem}.

The authors thank Professor Robert M. Guralnick for useful discussions.

\section{Preliminaries and proof of Theorem~\ref{t-sol}}

We notice  nice `radical' properties of the subgroups $F_i(G)$, $F^*_i(G)$, and $R_i(G)$, which will be often used without special references. Namely,  it follows from the definitions that if $N$ is a   normal subgroup of $G$, then
$$
F_i (N)=N\cap  F_i(G), \qquad F^*_i (N)=N\cap   F^*_i(G), \qquad R_i (N)=N\cap  R_i(G).
$$

For the Fitting subgroups we also have the inclusions $F_i(G)\cap H\leq F_i(H)$ for any, not necessarily normal, subgroup $H\leq G$. However, similar inclusions do not hold in general for the subgroups $F_i^*(G)$ and $R_i(G)$.

As a consequence we have the following.

\begin{lemma}
\label{l-sn}
Let $N$ be a subnormal subgroup of a finite group $G$. Then

{\rm (a)} the Fitting height (when $G$ is soluble), the generalized Fitting height, and the nonsoluble length of $N$ do not exceed the corresponding parameters of $G$, and

{\rm (b)} the Fitting height (when $N$ is soluble), the generalized Fitting height, and the nonsoluble length of the normal closure $\langle N^G\rangle$ are equal to the corresponding parameters of~$N$.
\end{lemma}

It also follows from the definitions that
$$
F_i (G/N)\geq  F_i(G)N/N, \quad F^*_i (G/N)\geq  F^*_i(G)N/N, \quad R_i (G/N)\geq  R_i(G)N/N
$$
for any normal subgroup $N$ of $G$.

When we consider a group $A$ acting by automorphisms on a  group $G$, we regard $A$ as a subgroup of the natural semidirect product $GA$, so that we can form the mutual  commutator subgroup $[G,A]$ and use the centralizer notation for the fixed-point subgroup $C_G(A)$.
Throughout the paper we use without special references the well-known
properties of coprime actions:  if $\alpha $ is an automorphism of a finite
group $G$ of coprime order, $(|\alpha |,|G|)=1$, then $ C_{G/N}(\alpha )=C_G(\alpha )N/N$ for any $\alpha $-invariant normal subgroup $N$, the equality  $[G,\alpha ]=[[G,\alpha,],\alpha ]$ holds, and if $G$ is in addition abelian, then $G=[G,\alpha ]\times C_G(\alpha )$.

We generalize slightly the notation introduced above.  If $H$ is a $g$-invariant subgroup of a group $G$, where $g\in G$ or $g\in {\rm Aut}\,G$, then
let
$$
E_{H,n}(g)=\langle [h,\underbrace{g,g,\dots, g}_{n}]\mid h\in H\rangle.
$$
Thus, $E_n(g)=E_{G,n}(g)$ when $g\in G$ and it is clear from the context which group $G$ is involved. It is clear that $C_G(g)$ normalizes  $E_{G,n}(g)$.

We are now going to prove Theorem~\ref{t-sol}.
The proof reduces to the following proposition.

\begin{proposition}
\label{pr-sol}
Let $\alpha $ be an automorphism of a finite soluble group $G$ such that $G=[G,\alpha]$.  Let $n$ be a positive integer. Then $E_{G,n}(\alpha ) =G$.
\end{proposition}

\begin{proof}
Let $E=E_{G,n}(\alpha )$ for brevity.
Let $G$ be a counterexample of minimal order, and let $M$ be a minimal $\alpha$-invariant normal subgroup of $G$. Since the image of $E$ is obviously equal to the similar subgroup constructed for $G/M$, we have $G=ME$ and $M\not\leq E$.
Then $M\cap E =1$ by minimality of $M$ because $M$ is abelian and $G=ME$.

Suppose that $C_M(\alpha )\ne 1$. Since $C_M(\alpha )$  normalizes $E$, we have $[C_M(\alpha ),E]\leq M\cap E=1$. Then $C_M(\alpha )$ is central in $G$ since $M$ is abelian and $G=EM$. By minimality, $C_M(\alpha )=M$ and $G = M \times E$. This
contradicts the hypothesis $G=[G,\alpha]$.

Thus,  $C_M(\alpha )=1$. Then the mapping $m\to [m,\alpha ]$ of $M$ into $M$ is injective, and therefore  surjective, since $M$ is finite. Hence every element of $M$ has the form $[m,\alpha ]$, and therefore also the form $[m,\alpha ,\alpha, \dots ,\alpha ]$ with $\alpha$ repeated $n$ times. Then $M\leq E$, which contradicts the equation $E\cap M =1$.
\end{proof}

\begin{proof}[Proof of Theorem~\ref{t-sol}] Recall that $g$ is an element of a finite soluble group $G$, and  the Fitting height of $E_n(g)$ is equal to $k$; we need to show that $g$ belongs to  $ F_{k+1}(G)$. Consider the subgroups
$$
[...[[G,g],g],\dots ,g]
$$
formed by taking successive commutator subgroups; in particular, these subgroups are subnormal in $G$. Let $H$ be the smallest of the subgroups $[...[[G,g],g],\dots ,g]$. Note that if $H=1$, then $g$ is a left-Engel element and therefore $g\in F(G)$ and there is nothing to prove.
In any case, $H=[H,g]$.

Let  $N=\langle H^G\rangle$  be the normal closure of $H$.
By construction, the image of $g$ in $G/N$ is a left-Engel element and therefore belongs to the Fitting subgroup of $G/N$. If we prove that the Fitting height of $N$ is at most $k$, then $N$, as a normal subgroup, is contained in $F_{k}(G)$ and then $g\in F_{k+1}(G)$, as required. Since $H$ is subnormal in $G$, the Fitting height
of its normal closure $N$ is the same as that  of $H$ by Lemma~\ref{l-sn}(b). Thus, it suffices to show that the Fitting height of $H$ is at most $k$.

Consider the automorphism $\alpha$ induced on $H$ by $g$ acting by conjugation. The subgroup $E_{H,n}(\alpha )$ is clearly contained in $E_n(g)$ and therefore has Fitting height at most $k$. Since $[H,\alpha ]=H$, by Proposition~\ref{pr-sol} we obtain $H=E_{H,n}(\alpha )$; hence the result.
\end{proof}

\section{Direct products of nonabelian finite simple groups}

We begin with elementary consequences of the Schreier conjecture, which may well be known.

\begin{lemma}\label{bou} If a finite group $G$ has a normal series all of whose factors are nonabelian simple groups, then $G$ is a direct product of nonabelian simple groups.
\end{lemma}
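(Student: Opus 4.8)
The plan is to induct on the length $r$ of the given normal series $1=G_0\trianglelefteq G_1\trianglelefteq\cdots\trianglelefteq G_r=G$, in which every $G_i$ is normal in $G$ and every factor $G_{i+1}/G_i$ is nonabelian simple; the cases $r\leq 1$ are clear, so suppose $r\geq 2$. The one structural fact about $G$ I would record at the outset is that $G$ is perfect: it has a normal series with perfect factors, so in the abelian quotient $G/[G,G]$, arguing up the series, the image of $G_{i+1}$ is simultaneously a quotient of the perfect group $G_{i+1}/G_i$ (once the image of $G_i$ is trivial) and abelian, hence trivial; thus $G=[G,G]$. Note also that $G_1$, being the factor $G_1/G_0$, is itself a nonabelian simple group, normal in $G$.

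Next I would bring in the Schreier conjecture. Conjugation gives a homomorphism $G\to {\rm Aut}(G_1)$ with kernel $C:=C_G(G_1)$; since the nonabelian simple group $G_1$ is centreless, $G_1\cap C=1$, the image of $G_1$ in ${\rm Aut}(G_1)$ is ${\rm Inn}(G_1)\cong G_1$, and $G_1C/C\cong G_1$ is normal in $G/C\hookrightarrow {\rm Aut}(G_1)$ with $(G/C)/(G_1C/C)\cong G/G_1C$ embedding into ${\rm Out}(G_1)$. By the Schreier conjecture ${\rm Out}(G_1)$ is soluble, so $G/G_1C$ is soluble; being also a quotient of the perfect group $G$, it is trivial. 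Hence $G=G_1C$, and as $G_1$ and $C$ are normal in $G$ with trivial intersection, $G=G_1\times C$.

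It then remains to transfer the series to $C$. For $i\geq 1$ we have $G_1\leq G_i$, so Dedekind's modular law gives $G_i=G_i\cap G_1C=G_1(G_i\cap C)=G_1\times(G_i\cap C)$. Setting $C_i:=G_i\cap C$, each $C_i$ is normal in $G$, hence in $C$, the chain $1=C_1\trianglelefteq C_2\trianglelefteq\cdots\trianglelefteq C_r=C$ is a normal series of $C$ of length $r-1$, and its factors satisfy $C_{i+1}/C_i\cong(G_1\times C_{i+1})/(G_1\times C_i)\cong G_{i+1}/G_i$, which is nonabelian simple. By the induction hypothesis $C$ is a direct product of nonabelian simple groups, whence so is $G=G_1\times C$.

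The step I expect to be the real content is $G=G_1C_G(G_1)$: it is precisely there that the two non-formal inputs enter, namely the Schreier conjecture to force ${\rm Out}(G_1)$ soluble and the perfectness of $G$ to annihilate the resulting soluble quotient. Everything else is bookkeeping, the only mild care needed being the modular-law splitting of the terms $G_i$ along the direct decomposition $G=G_1\times C$.
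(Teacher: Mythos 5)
Your proof is correct and follows essentially the same route as the paper's: induct on the length of the series, use the Schreier conjecture together with the fact that $G$ has no nontrivial soluble quotients to get the splitting $G = G_1 \times C_G(G_1)$, and then conclude by induction. The only cosmetic differences are that you prove perfectness of $G$ directly by climbing the series (the paper cites Jordan--H\"older to rule out soluble quotients), and you transfer the given normal series to $C_G(G_1)$ explicitly via the modular law, whereas the paper applies the induction hypothesis to $G/G_1 \cong C_G(G_1)$ directly.
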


\begin{proof} Let
$$
1=G_0\lhd G_1 \lhd G_2\lhd \cdots \lhd G_{j-1}\lhd G_j=G
 $$
 be a normal series all of whose factors $G_i/G_{i-1}$ are nonabelian simple groups. We use induction on the length of the series $j$, the basis of which is obvious. For $j>1$, the quotient $G/G_{1}$ is a direct product of nonabelian simple groups by the induction hypothesis. The quotient $G/C_G(G_1)$ embeds in the automorphism group of $G_1$. Since the group of outer automorphisms of $G_1$ is soluble by
 the Schreier conjecture
 and $G$ has no soluble homomorphic images by the Jordan--H\"older theorem, we must  have     $G=G_1C_G(G_1)=G_1\times C_G(G_1)$, since $G_1\cap C_G(G_1)=1$. The result follows.
\end{proof}

We shall use without special references the well-known fact that in any direct product $S_1\times \dots \times S_r$ of nonabelian finite simple groups the only normal subgroups are products $S_{i_1}\times\dots\times S_{i_l}$ of some of the factors.

Recall that a subgroup $H$ of a direct product $G_1\times\dots\times G_r$ is called a subdirect product of the groups $G_1, \dots , G_r$ if the natural projection of $H$ onto every $G_i$ is equal to $G_i$.

\begin{lemma}\label{bbou} Let $G=S_1\times\dots\times S_r$ be a direct product of isomorphic nonabelian finite simple groups. Let $H\leq G$ be a subdirect product of the same groups.   Then $H=H_1\times\dots\times H_u$, where every $H_i$ is isomorphic to $S_1$.
\end{lemma}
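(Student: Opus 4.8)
The plan is to induct on $r$, peeling off the last factor $S_r$. If $r=1$ there is nothing to prove, since a subdirect product of $S_1$ is $S_1$ itself. Assume $r\geq 2$ and let $\bar H$ be the image of $H$ under the projection $\pi\colon G\to S_1\times\dots\times S_{r-1}$; this $\bar H$ is again a subdirect product of $S_1,\dots,S_{r-1}$, so by the induction hypothesis $\bar H=D_1\times\dots\times D_t$ with every $D_i$ isomorphic to $S_1$. Set $N=H\cap S_r$, where $S_r$ is identified with $1\times\dots\times 1\times S_r$; this $N$ is exactly the kernel of $\pi|_H$, and hence $N\lhd H$.

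The one step that needs an idea is to show that in fact $N\lhd S_r$, so that $N=1$ or $N=S_r$ by simplicity of $S_r$, ruling out the awkward intermediate case $1\ne N\lneq S_r$. For this, take any $s\in S_r$; subdirectness gives $h\in H$ whose $r$th coordinate is $s$, and since the remaining coordinates of $h$ centralise $S_r$, conjugation by $h$ acts on $S_r$ just as conjugation by $s$ does, while preserving $N$ because $N\lhd H$. Thus $N$ is invariant under every inner automorphism of $S_r$, as claimed. Everything that follows is routine bookkeeping with projections together with the fact, recalled just before the lemma, that the normal subgroups of a direct product of nonabelian simple groups are precisely the sub-products of the factors.

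Two cases remain. If $N=S_r$, then $S_r\leq H$; since $(1,\dots,1,h_r)\in S_r\leq H$ for any $(h_1,\dots,h_r)\in H$, also $(h_1,\dots,h_{r-1},1)\in H$, and it follows that $H=\bar H\times S_r=D_1\times\dots\times D_t\times S_r$, which is of the required form. If $N=1$, then $\pi|_H$ is an isomorphism $H\to\bar H$, so sending an element of $H$ to its $r$th coordinate defines a surjective homomorphism $\psi\colon\bar H=D_1\times\dots\times D_t\to S_r$, and $H=\{(y,\psi(y)):y\in\bar H\}$. By the quoted description of normal subgroups, $\ker\psi=\prod_{i\ne i_0}D_i$ for a unique index $i_0$, and $\psi$ restricts to an isomorphism $D_{i_0}\to S_r$. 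Writing $\bar H=D_{i_0}\times B$ with $B=\prod_{i\ne i_0}D_i=\ker\psi$, we obtain $H=B\times\{(d,\psi(d)):d\in D_{i_0}\}$, whose second factor is isomorphic to $D_{i_0}$ and hence to $S_1$. In both cases $H$ is a direct product of copies of $S_1$, and in fact these copies are full diagonal subgroups over a partition of $\{1,\dots,r\}$ into blocks, although the statement only asks for the weaker conclusion.
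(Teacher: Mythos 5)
Your proof is correct, and it takes a genuinely different and in fact more elementary route than the paper. The paper's one-line proof considers the kernels $T_i$ of the coordinate projections of $H$ and observes that the chain $H \geq T_1 \geq T_1\cap T_2 \geq \dots \geq T_1\cap\dots\cap T_r = 1$ has factors embedding in the simple groups $S_j$, so Lemma~\ref{bou} applies directly; note, however, that Lemma~\ref{bou} relies on the Schreier conjecture. Your induction on $r$ avoids Lemma~\ref{bou} entirely: the key observation that $N=H\cap S_r$ is normal not just in $H$ but in $S_r$ (because any $s\in S_r$ is the last coordinate of some $h\in H$, and conjugation by $h$ acts on $S_r$ as conjugation by $s$ while fixing $N$) forces $N\in\{1,S_r\}$ by simplicity, and the rest is the bookkeeping you describe, needing only the elementary fact about normal subgroups of direct products of nonabelian simple groups. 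So your argument is CFSG-free where the paper's is not, at the cost of being somewhat longer; it also yields slightly more, namely that the $H_i$ are exactly diagonal subgroups over a partition of $\{1,\dots,r\}$ into blocks, which anticipates the $d$-subgroup language the paper introduces later. Both proofs are sound; the paper's is shorter given the machinery already in place.
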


\begin{proof}  Let $T_i$ denote the kernel of the $i$th projection of $H$.
Then intersections of some of the $T_i$ form a series as in Lemma \ref{bou}.
\end{proof}

For the rest of the section we shall work under the following setting.

\begin{hyp}\label{h-33}
Let $S=S_1\times\dots\times S_r$ be a direct product of $r$ isomorphic finite non-abelian simple groups and let $\varphi$ be the natural automorphism of $S$ of order $r$ that regularly permutes the $S_i$.
\end{hyp}

Note that $S$ has no proper  normal $\varphi$-invariant subgroups. In particular, $[S,\varphi ]=S$.

Let $D=C_S(\varphi )$ be the diagonal. More generally, a subgroup of $S$ that is the diagonal  (with respect to $\varphi $) in the product of some of the $S_i$ (not necessarily of all) is called a \textit{$d$-subgroup}. More precisely, a $d$-subgroup has the form
\begin{gather*}
\{(\dots , a_{i_1}, \dots , a_{i_2}, \dots , a_{i_l},\dots )\mid a_{i_s}=a_{i_1}^{\varphi ^{i_s-i_1}}\},
\end{gather*}
with segments of 1s (possibly empty) between the $a_{i_j}$.
In cases where we want to indicate the set of indices $I=\{i_1,\dots ,i_l\}$ of nontrivial coordinates in a $d$-subgroup, we  call it a $d(I)$-subgroup.

\begin{lemma}\label{l-d1} Under Hypothesis~\ref{h-33}, suppose that $K$ is the $d(I)$-subgroup for some $I$. Then $C_S(K)$ is the product of $S_i$ with $i\not\in I$ and $N_S(K)=K\times C_S(K)$.
\end{lemma}

\begin{proof}
This follows from the fact that $Z(S_i)=1$. The part about $C_S(K)$ is clear. If an element $b$ of $S$ has different coordinates $b_{u}\ne b_{v}$ for $u,v\in I$, then there is $x\in K$ with nontrivial coordinates $a$ such that $a^{b_{u}}\ne a^{b_{v}}$, so that $b\not\in N_S(K)$.
\end{proof}

\begin{lemma}\label{l-d2} Under Hypothesis~\ref{h-33}, suppose that $K$ is a $d$-subgroup and $x\in S$ is such that $K^x$ is also a $d$-subgroup. Then actually $x\in N_S(K)$.
\end{lemma}

\begin{proof}
This follows from the fact that  conjugation by $x\in S$ does  not change the set of indices of nontrivial coordinates.
\end{proof}

\begin{lemma}\label{l-d3} Under Hypothesis~\ref{h-33}, let a subgroup $H$ of $S$ be isomorphic to $S_1$ and suppose that $H$ is normalized by the diagonal $D$. Then $H$ is a $d$-subgroup.
\end{lemma}

\begin{proof} It is sufficient to show that if $H$ has nontrivial projections on some two subgroups $S_{i}$ and $S_{j}$, then the projection $P$ of $H$ onto $S_{i}\times S_{j}$ is equal to the $d(\{ i,j\} )$-subgroup $K$. If this is not the case, then $P\cap K=1$, since $P$ is normalized by $K$ by hypothesis and $K\cong P\cong S_1$. Then $PK=S_{i}\times S_{j}$ and therefore $P$ is normal in $S_{i}\times S_{j}$. Being a proper subgroup, it must be one of the factors, which contradicts the choice of $S_{i}, S_{j}$.
\end{proof}

\begin{lemma}\label{l-d4} Assume Hypothesis~\ref{h-33} with $r=|\varphi |$ being a prime.  If $H$ is a $\varphi $-invariant subgroup of $S$ containing the  diagonal $D$, then either $H=D$ or $H=S$.
\end{lemma}

\begin{proof}
Since $H\geq D$, by Lemma~\ref{bbou} we have $H=H_1\times \dots \times H_l$, where every $H_i$ is isomorphic to $S_1$. Since $D\leq H $ normalizes every $H_i$, by Lemma~\ref{l-d3} $H_i$ is a $d$-subgroup. We assume that $H\ne D$ and prove that $H=S$. Choose $H_i\ne D$. We claim that in fact $H_i=S_j$ for some $i,j$, which will imply that $H=S$ due to the $\varphi$-invariance.  Suppose that  $H_i$ has more than one nontrivial projections onto the $S_i$ --- but necessarily less than $|\varphi |$, since $H_i\ne D$ and $H_i$ is a $d$-subgroup. Since $r=|\varphi |$ is a prime, then we can choose a power of $\varphi $ such that $H_i^{\varphi ^w}\ne H_i$ and both $H_i^{\varphi ^w}$ and $H_i$ have nontrivial projections onto the same $S_k$. The former implies that $H_i^{\varphi ^w}=H_j\ne H_i$, so that $[H_i,H_i^{\varphi ^w}]=1$, while the latter implies that $[H_i,H_i^{\varphi ^w}]\ne 1$, a contradiction.
\end{proof}

\begin{lemma}\label{l2} Assume Hypothesis~\ref{h-33} with $r=|\varphi |$ being a prime. Let $n$ be a positive integer and let
$ F=\langle [x,\underbrace{\varphi ,\dots ,\varphi }_{n}]\mid x\in S_i\rangle$. Then $F=S$.
\end{lemma}

\begin{proof}
First note that  $[F,\varphi ]\ne 1$. Indeed, choose some element $x\in S_1$ of prime order $q\ne r$. Since $Q=\langle x ^{\langle \varphi \rangle}\rangle$ is an abelian $\varphi $-invariant $q$-group, we have
$$1\ne [Q,\varphi ]=[[Q,\varphi ],\varphi ]=\langle  [x,\underbrace{\varphi ,\varphi ,\dots ,\varphi} _{k}]\mid k\geq n+1\rangle .$$
In particular, $1\ne [x,\underbrace{\varphi ,\varphi ,\dots ,\varphi} _{n+1}]\in [F,\varphi ]$. It follows from the definition that $F$ is a $\varphi $-invariant subgroup, which is also normalized by $D$. Then $FD$ is a $\varphi $-invariant subgroup properly containing $D$ and therefore $FD=S$ by Lemma~\ref{l-d4}. As a result, $F$ is normal in~$S$. Being nontrivial and $\varphi $-invariant, it must coincide with $S$.
\end{proof}

\begin{lemma}\label{l2new} Assume Hypothesis~\ref{h-33}. Let $n$ be a positive integer and let
$ E=E_{S,n}(\varphi )=\langle [x,\underbrace{\varphi ,\dots ,\varphi }_{n}]\mid x\in S\rangle$. Then $E=S$.
\end{lemma}

\begin{proof}
Let $\varphi ^k$ be of prime order. Then $S=\langle [g,\underbrace{\varphi ^k,\dots ,\varphi ^k}_{n}]\mid g\in S_i,\; i=1,\dots ,r\rangle$ by Lemma~\ref{l2} applied to each orbit of $\varphi ^k$. It remains to observe that $[g,\underbrace{\varphi^k,\dots ,\varphi^k}_{n}]\in  E$ for $g\in S_i$. Indeed,   $A=\langle g ^{\langle \varphi \rangle}\rangle$ is an abelian $\varphi $-invariant group, in which repeated application of the formulae $[b,\varphi ^{i+1}]=[b^{\varphi},\varphi ^{i}][b,\varphi ]$ and $[bc, \varphi ^i]=[b, \varphi ^i][c, \varphi ^i]$ for $b,c\in A$ expresses the commutator $[g,\underbrace{\varphi^k,\dots ,\varphi^k}_{n}]$ in terms of commutators  $[x,\underbrace{\varphi ,\dots ,\varphi }_{n}]$ for $x\in A$.
\end{proof}

\section{Orbits in some transitive permutational actions}

We shall need several lemmas on transitive permutational actions of certain finite groups $G$ concerning the existence of exact (regular) orbits of an element $g\in G$. Rather than speaking about orbits, we prefer to state these lemmas in terms of intersections of $\langle g\rangle$ with a conjugate of the stabilizer of a point $H\leq G$. The following lemma is quite elementary.

\begin{lemma}\label{l-triv}
Let $g$ be an element, and $H$ a subgroup of a group $G$. Suppose that $g$ is contained in a subgroup $G_0\leq G$ and  $\langle g\rangle $ has trivial intersection with $(H\cap G_0)^x$ for some $x\in G_0$. Then $H^x\cap \langle g\rangle=1$.
\end{lemma}

\begin{proof}
Let $H_0=H\cap G_0$. Then
\begin{equation*}
H^x\cap \langle g\rangle=H^x\cap G_0\cap \langle g\rangle=(H \cap G_0)^x\cap \langle g\rangle=H_0^x\cap \langle g\rangle=1.\qedhere
\end{equation*}
\end{proof}

\begin{lemma}\label{pr4} Let $S=S_1\times\cdots\times S_l$ be a direct product of finitely many nonabelian finite simple groups. Let $\varphi $ be an automorphism of $S$ such that every $\varphi $-orbit of the permutational action on $\{S_1,\dots,S_l\}$ has $|\varphi |$ elements. Let $H$ be a subgroup of $S\langle \varphi \rangle$ such that $S\not\leq H$. Then there is $x\in S$ such that $H^x\cap \langle \varphi \rangle=1$.
\end{lemma}

\begin{proof} We proceed by induction on $|S\langle \varphi \rangle |$. As a basis of induction we can consider the case where $|\varphi |$ is a prime, where the result is obvious. Indeed,  if $\varphi \in \bigcap _{x\in S}H^x$, then, since $\bigcap _{x\in S}H^x$ is $S$-invariant,
$$
S=[S,\varphi ]\leq [S,\bigcap _{x\in S}H^x]\leq H,
 $$
  a contradiction with hypothesis.
Next, if the order of $\varphi $ is divisible by $p^2$ for some prime $p$, then we can work with $\varphi ^p$, which also satisfies the condition that all its orbits are of length $|\varphi ^p|$. By induction applied to $S\langle \varphi ^p\rangle$ and $H_0=H\cap S\langle \varphi ^p \rangle$ we obtain $x\in S$ such that $H_0^x\cap \langle \varphi ^p\rangle =1$,  whence $H^x\cap \langle \varphi ^p\rangle =1$ by Lemma~\ref{l-triv} and then $H^x\cap \langle \varphi \rangle =1$. So we can assume that the order of $\varphi $ is square-free and is divisible by at least two primes. Let $p$ be a prime divisor of $|\varphi |$. Let $\beta =\varphi ^p$ and let $\alpha$ be an element of order $p$ in $\langle \varphi \rangle$. By induction  there exists $x\in S$ such that $(H\cap S \langle \beta \rangle )^x\cap \langle \beta\rangle=1$, whence $H^x\cap \langle \beta\rangle=1$. If $\alpha \not\in H^x$, then the proof is complete. Therefore we can assume  that $\alpha\in H^x$. Replacing $H$ by $H^x$ we now assume that $H\cap \langle \beta \rangle=1$ and $\alpha\in H$.

We can choose a $\varphi $-orbit such that the product $T$  of the simple factors in this orbit satisfies $T\not\leq H$. If $T\ne S$, then by induction there is $x\in T$ such that $(H\cap T\langle \varphi \rangle)^x\cap  \langle \varphi \rangle=1$; then $H^x\cap \langle \varphi \rangle =1$ by Lemma~\ref{l-triv}. Thus, we can assume that $S$ is a product over one $\varphi $-orbit.  Let $C=C_S(\beta )$. Since $H^x\cap \langle \beta\rangle=1$ for all $x\in C$, we can assume that $\alpha ^x\in H$ for all $x\in C$. Since $\alpha ^x=\alpha [\alpha ,x]$ and $\alpha \in H$, we deduce that $[C,\alpha]\leq H$. Since $C$ is the direct product of $d$-subgroups corresponding to $\beta$-orbits, and these $d$-subgroups are regularly permuted by $\alpha$, it follows that $C=[C,\alpha]$. In particular, the diagonal $D$ of $S$ is contained in $H$ and so $H\cap S$ is a subdirect product of the $S_i$. Hence $H\cap S$ is a direct product of subgroups $H_i$ isomorphic to $S_1$  by Lemma~\ref{bbou}.

If  $SH\ne S\langle \varphi \rangle$, then $SH=S\langle \varphi ^t\rangle $ for some $t>1$. In this case we can apply the induction hypothesis to $S\langle \varphi ^t\rangle$, find  $x\in S$ such that $H^x\cap \langle \varphi ^t\rangle =1$, and then also $H^x\cap \langle \varphi  \rangle =  H^x\cap S\langle \varphi ^t\rangle \cap \langle \varphi  \rangle  =H^x\cap \langle \varphi ^t\rangle =1$, since $S\cap \langle \varphi \rangle =1$. Thus, we assume that $SH=S\langle \varphi \rangle$. Then $s\varphi =h$ for some $s\in S$ and $h\in H$. Note that $h$ induces  the same permutation on $\{S_1,\dots, S_l\}$ as $\varphi $.

Since $D\leq H $ normalizes every $H_i$, by Lemma~\ref{l-d3} we obtain that $H_i$ is a $d$-subgroup.
We claim that in fact $H_i=S_j$ for some $j$. Indeed, choose an element $a_1\in S_1$ such that $a_1\ne a_1^{-1}$. By assumption, $H^{a_1^{-1}}\cap \langle \varphi \rangle \ne 1$, so that there is a nontrivial element $\psi \in \langle \varphi \rangle$ such that $\psi ^{a_1}\in H$. Then for every $i$ we have $H_i^{\psi ^{a_1}}=H_j$ for some $j=j(i)$. Since $H_i^{\psi }$ is also a $d$-subgroup and $\big(H_i^{\psi }\big)^{[\psi ,a_1]} =H_i^{\psi ^{a_1}}=H_j$, by Lemma~\ref{l-d2} we obtain that $[\psi ,a_1]$ normalizes  $H_i^{\psi }  =H_j$. Thus, $\psi$ permutes the subgroups $H_u$. It follows that $H_j^{[\psi ,a_1]} =H_j$ for all $j$. Since $D\leq H$, there is a $d$-subgroup $H_j$ with nontrivial projections onto at least one of the factors $S_1$ or $S_k=S_1^{\psi}\ne S_1$, which contain the nontrivial projections $a_1$ and $(a_1^{-1})^{\psi}$ of $[\psi ,a_1]$. Since $a_1\ne a_1^{-1}$, it follows from Lemma~\ref{l-d1} that $H_j=S_1$ or $H_j=S_k$.

The latter implies that $S\leq H$ due to the invariance under $h\in H$, which induces the same permutation on the $S_u$ as $\varphi $. This contradicts the hypothesis.
\end{proof}

\begin{lemma}\label{pr5}
 Let $S=S_1\times\dots\times S_r$ be a direct product of $r$ isomorphic nonabelian finite simple groups, and let $\varphi $ be an automorphism of $S$ that transitively permutes the $S_i$. Suppose that for some prime $p$ the stabilizer of $S_1$ in $\langle\varphi\rangle$ is a
 $p$-subgroup, possibly trivial. Let $H$ be a subgroup of $S\langle \varphi \rangle$ such that $S\not\leq H$. Then there is $x\in S$ such that $H^x\cap \langle \varphi \rangle=1$.
\end{lemma}

Note that we do not exclude the case of $r=1$, when of course $\varphi $ is an automorphism of  $S=S_1$ of prime-power order.

\begin{remark}
  The condition in Lemma~\ref{pr5} that the stabilizer of $S_1$ in $\langle\varphi\rangle$ is a $p$-subgroup is essential. The smallest example is given by the alternating group $S_1={ Alt}_5$, its automorphism $\varphi$  induced by conjugation by an element of order $6$ in the ambient symmetric group ${ Sym}_5$, and $H$ being a stabilizer of a point in the natural representation of ${ Sym}_5$ on $5$ points.
\end{remark}

\begin{proof}[Proof of Lemma~\ref{pr5}]
 We proceed by induction on $|S\langle \varphi \rangle |$.
As a basis of induction we can consider the case where $|\varphi |$ is a prime, where the result is obvious. Indeed,  if $\varphi \in \bigcap _{x\in S}H^x$, then, since $\bigcap _{x\in S}H^x$ is $S$-invariant, $
S=[S,\varphi ]\leq [S,\bigcap _{x\in S}H^x]\leq H$,
  a contradiction with hypothesis.

Next, if the order of $\varphi $ is divisible by $q^2$ for some prime $q$, then we can work with $\varphi ^q$ in place of $\varphi $. There is an orbit (possibly one-element) of the permutation induced by $\varphi ^q$  on the set $\{S_1, \dots , S_r\}$ such that  the  product $T$ over this orbit satisfies $T\not\leq H$. Then the group $T\langle \varphi ^q\rangle$ satisfies the hypotheses  of the  lemma with $H_0=H\cap T\langle \varphi ^q\rangle$ in place of $H$. By induction we find $x\in T$ such that $H_0^x\cap \langle \varphi ^q\rangle =1$, whence $H_0^x\cap \langle \varphi \rangle =1$, and then
$H^x\cap \langle \varphi \rangle =1$ by Lemma~\ref{l-triv}. So we can assume that the order of $\varphi $ is square-free and is divisible by at least two primes.

If the Sylow $p$-subgroup of $\langle \varphi \rangle$ is trivial, then we get the result by Lemma~\ref{pr4}. So let $\alpha $ be an  element of order $p$ which generates the  stabilizer of $S_1$ in $\langle \varphi \rangle$, and let $\beta=\varphi ^p$. By induction there exists $x\in S$ such that $(H\cap S\langle\beta\rangle)^x\cap \langle \beta \rangle=1$, whence $H^x\cap \langle \beta \rangle=1$ by Lemma~\ref{l-triv}. If $\alpha\not\in H^x$, then the proof is complete; so we can assume  that $\alpha \in H^x$. Replacing $H$ by $H^x$ we now assume that $H\cap \langle \beta \rangle=1$ and $\alpha \in H$.

Let $D=C_S(\beta )$. Since $H^x\cap \langle \beta\rangle=1$ for all $x\in D$, we can assume that $\alpha ^x\in H$ for all $x\in D$. Since $\alpha ^x=\alpha [\alpha ,x]$ and $\alpha \in H$, we deduce that $D=[D,\alpha ]\leq H$. Hence $H\cap S$ is a subdirect product of the $S_i$. By Lemma~\ref{bbou} we obtain that $H\cap S$ is a direct product of subgroups $H_i$ isomorphic to $S_1$.

If  $SH\ne S\langle \varphi \rangle$, then $SH=S\langle \varphi ^t\rangle $ for some $t>1$.
We can choose an orbit of the permutation induced by  $\varphi ^t$ on the set $\{S_1, \dots , S_r\}$ such that the product $T$  of the simple factors in this orbit satisfies $T\not\leq H$. The group $T\langle \varphi^t\rangle$ satisfies the hypotheses  of the  lemma with $H_0=H\cap T\langle \varphi ^t\rangle$ in place of $H$. By induction we find $x\in T$ such that $H_0^x\cap \langle \varphi ^t\rangle =1$, whence
$H^x\cap \langle \varphi ^t\rangle =1$ by Lemma~\ref{l-triv}, and then $H^x\cap \langle \varphi \rangle =H^x\cap S \langle \varphi ^t\rangle \cap \langle \varphi \rangle =H^x\cap  \langle \varphi ^t\rangle=1$, since $S\cap\langle \varphi \rangle =1$.
Thus, we assume that $SH=S\langle \varphi \rangle$. Then $s\varphi =h$ for some $s\in S$  and $h \in H$. Note that $h$
induces  the same permutation on $\{S_1,\dots , S_r\}$ as $\varphi $.

Since $D\leq H $, it follows that $D$ normalizes every $H_i$, and by Lemma~\ref{l-d3} we obtain that every $H_i$ is a $d$-subgroup. If $D=H\cap S$, then $D$ is normalized by $s\varphi =h\in H$. Since $D$ is also normalized by $\varphi$, we obtain that $D$ is normalized by $s$, whence $s\in D$ by Lemma~\ref{l-d1}. Since $D\leq H$, we obtain that $\varphi \in H$, a contradiction with our assumption.
Therefore we can assume that there is $H_i\ne D$.

We claim that in fact $H_i=S_j$ for some $j$. Indeed, choose an element $a_1\in S_1$ such that $a_1\ne a_1^{-1}$. By assumption, $H^{a_1^{-1}}\cap \langle \varphi \rangle \ne 1$, so that there is a nontrivial element $\psi \in \langle \varphi \rangle$ such that $\psi ^{a_1}\in H$. If $\psi\not\in \langle \alpha\rangle$, then we argue in the same fashion as in the proof of Lemma~\ref{pr4}. Namely, $H_i^{\psi }$ is a $d$-subgroup for every $i$ and $\big(H_i^{\psi }\big)^{[\psi ,a_1]} =H_i^{\psi ^{a_1}}=H_j$; hence    $\big(H_i^{\psi }\big)^{[\psi ,a_1]} =H_i^{\psi }$ by Lemma~\ref{l-d2};  therefore $H_j^{[\psi ,a_1]} =H_j$ for all $j$. Since $D\leq H$, there is a $d$-subgroup $H_j$ with nontrivial projections onto at least one of the factors $S_1$ or $S_k=S_1^{\psi}\ne S_1$, which contain $a_1$ and  $(a_1^{-1})^{\psi}$. Since $a_1\ne a_1^{-1}$, it follows from Lemma~\ref{l-d1} that $H_j=S_1$ or $H_j=S_k$, which leads to $S\leq H$, a contradiction.

It remains to consider the case where $1\ne H^{a_1^{-1}}\cap \langle \varphi \rangle \leq \langle \alpha \rangle $ for all $a_1\in S_1$ such that $|a_1|\ne 2$. Since such elements $a_1$ generate $S_1$ and $\alpha$ is a nontrivial automorphism of $S_1$, there is $a_1\in S_1$ such that $a_1\ne a_1^{\alpha }$. Since $\alpha\in H$ and $\alpha ^{a_1}\in H$ by our assumption, we obtain a nontrivial element $[\alpha , a_1]\in S_1\cap H$. Since $H\cap S$ is a direct product of $d$-subgroups $H_i$, we must have $S_1=H_j$ for some $j$.

In any case, we have $H_i=S_j$ for some $i,j$. This implies that $H=S$ due to the invariance under $h\in H$, which induces the same permutation on the $S_u$ as~$\varphi $. This contradicts the hypothesis.
\end{proof}

Let $N$ be a normal subgroup of a group $G$, and $A$ a subgroup of $G/N$. We shall be saying for brevity that $B$ \textit{covers} $A$ if $BN/N\geq A$.

The following proposition is the main result of this section.

\begin{proposition}\label{prm}
Let $G\langle g\rangle$ be a finite group with a normal subgroup $G$ and a cyclic subgroup $\langle g\rangle$, and let $R$ be the soluble radical of $G$. Let $\langle g_0\rangle=C_{\langle g\rangle} (G/R)$,  so that the image $\bar g$ of $g$ in $\langle g\rangle/\langle g_0\rangle$ is the automorphism of $G/R$ induced by conjugation by $ g$.  Suppose that  the following conditions hold:
  \begin{itemize}
   \item[\textrm (1)]  $G$ is a minimal $g$-invariant subgroup covering $G/R$;
   \item[\textrm (2)] $G\cap \langle g\rangle\leq C_{\langle g\rangle}(G/R)$;
   \item[\textrm (3)]  $G/R=S_1\times\dots\times S_r$ is a direct product of isomorphic nonabelian finite simple groups, which are transitively permuted by $\bar g$;
   \item[\textrm (4)] for some prime $p$ the stabilizer of $S_1$ in $\langle\bar g\rangle$ is a  $p$-subgroup, possibly trivial.
 \end{itemize}
  Then for any subgroup $H\leq G\langle g\rangle$ such that $G\not\leq H$ there is $z\in G$ such that $H^{z}\cap \langle g\rangle \leq \langle g_0\rangle$.
\end{proposition}

 For brevity we shall refer to orbits of elements of $G\langle g\rangle$ on $\{S_i\}$ meaning orbits in the set $\{S_1, \dots , S_r\}$ of the corresponding induced permutations.

\begin{proof} Induction on $|G\langle g\rangle|$. As a basis of induction we can take the case $R=1$. In this case, $\langle g_0\rangle$ is a central subgroup, and in the quotient $G\langle g\rangle /\langle g_0\rangle$ the image of $H$ does not contain the image of $G$. Indeed, otherwise $G\leq H\langle g_0\rangle$ and then   $G=[G,G]\leq [H\langle g_0\rangle, H\langle g_0\rangle ]=[H,H]$, contrary to the assumption $G\not\leq H$. Thus, due to conditions (3), (4), we can apply Lemma~\ref{pr5} to the group $G\langle g\rangle /\langle g_0\rangle$ and  find  $z\in G$ such that  $H^z\cap \langle g\rangle \leq \langle g_0\rangle $, as required.

Now let $R\neq1$ and let $M$ be a minimal $g$-invariant abelian normal subgroup  of $G$. If $G\not\leq MH$, then we can apply induction to $G\langle g\rangle/M$. Indeed, since $M\leq R$, conditions (1), (3), (4) obviously hold for the images in  $G\langle g\rangle/M$, and the image of $\langle g_0\rangle$ is the centralizer of $G/R$ in the image of $\langle g\rangle$. We check condition (2): if $am=g^i$ for $a\in G$, $m\in M$, then $g^i\in G$ and therefore $g^i\in C_{\langle g\rangle }(G/R)$ by condition~(2) for $G\langle g\rangle$. By induction we find $z\in G$ such that  $H^z\cap \langle g\rangle \leq \langle g_0\rangle M$; then
$H^z\cap \langle g\rangle \leq \langle g_0\rangle M\cap \langle g\rangle \leq C_{\langle g\rangle}(G/R)= \langle g_0\rangle
$,
 as required.
Thus, we can assume that $G\leq MH$.

Suppose that $\langle g_0\rangle =C_{\langle g\rangle}(G/R)$ contains some nontrivial Sylow $q$-subgroup $\langle g_q\rangle $ of $\langle g\rangle $. Then $\langle g\rangle = \langle g_{1}\rangle \times \langle g_q\rangle $, where $\langle g_{1}\rangle$ is (necessarily nontrivial) Hall $q'$-subgroup. We choose a minimal $g_1$-invariant subgroup $G_1$ of $G$ covering $G/R$. Then the subgroups $G_1^{g_q^j}$ are of similar nature and together generate  $G$ by minimality of  $G$. Since $G\not \leq H$, we can assume without loss of generality that $G_1\not\leq H$.
The hypotheses of the proposition are satisfied for $G_1\langle g_{1}\rangle$ and $H\cap G_1\langle g_{1}\rangle$. Indeed, (1) is true by construction, (2) is inherited from the same condition for $g$, while (3) and (4) hold because the action of $g_1$ on $G_1/(G_1\cap R)$ is similar to the action of $g$ on $G/R$. Since $G_1\not\leq H\cap G_1\langle g_{1}\rangle$, by induction we find $z\in G_1$ such that $(H\cap G_1\langle g_{1}\rangle )^z\cap \langle g_{1}\rangle\leq \langle g_0\rangle$, whence  $H ^z\cap \langle g_{1}\rangle\leq \langle g_0\rangle$. Then, since $\langle g_q\rangle\leq \langle g_0\rangle$ by assumption,
 $$
 H ^z\cap \langle g\rangle =  \big(H ^z\cap \langle g_{1}\rangle \big)\times  \big(H ^z\cap \langle g_{q}\rangle\big)\leq \langle g_0\rangle.
 $$
 Thus, we can assume that every Sylow $q$-subgroup of $\langle g \rangle$ acts nontrivially on $G/R$.

Now suppose that  $GH \ne G\langle g\rangle $; then $GH\leq G\langle g^s\rangle \ne G\langle g\rangle$ for some prime $s$.  Note that $\langle g^s\rangle\geq \langle g_0\rangle$ by the assumption at the end of the last paragraph.
For a $g^s$-orbit on $\{S_i\}$, we choose a minimal $g^s$-invariant subgroup $G_1$ of $G$ covering the product of the $S_i$ in this $g^s$-orbit. Then the subgroups $G_1^{g^j}$ are of similar nature and  $G =\langle G_1,G_1^g,\dots ,G_1^{g^{s-1}}\rangle$ by minimality of   $G$. Since $G\not \leq H$, we can assume without loss of generality that $G_1\not\leq H$. Note that the centralizer of $G_1R/R$
in $\langle g^s\rangle $ is equal to $\langle g_0\rangle$ because the products of simple factors over $g^s$-orbits are permuted by $g$.
 The group $G_1\langle g^s\rangle$ satisfies the hypotheses  of the  proposition with $H_1=H\cap G_1\langle g^s\rangle$ in place of $H$. Indeed, conditions (1), (3) are satisfied by construction of $G_1$, and conditions (2), (4) are obviously inherited from the same conditions for $ \langle  g\rangle$.

By induction we find $z\in G_1$ such that $H_1^z\cap \langle g^s\rangle \leq \langle g_0\rangle$. Note that $G\langle g^s\rangle \cap \langle g \rangle=\langle g^s \rangle$, since $\langle g^s\rangle\geq \langle g_0\rangle$ and $G\cap \langle g\rangle\leq \langle g_0\rangle$ by condition (2). Then
\begin{align*}H^z\cap \langle g\rangle
&= H^z\cap G\langle g^s\rangle \cap \langle g\rangle
= H^z\cap \langle g^s\rangle  \\
&= H^z\cap G_1\langle g^s\rangle \cap \langle g^s\rangle
=(H\cap  G_1\langle g^s\rangle )^z\cap \langle g^s\rangle\\
&=H_1^z\cap \langle g^s\rangle \leq \langle g_0\rangle
\end{align*}
Thus, we can assume that $GH=G\langle g\rangle$.

It follows that $M\cap H=1$ by minimality of $M$. Indeed, this intersection is normal in $MH$ because $M$ is abelian, and $MH=G\langle g\rangle$ because $MH\geq GH=G\langle g\rangle$. We also have $M\not\leq H$, since $G\not\leq H$.

Let $a_1,\dots,a_s$ be elements of prime orders in $\langle g\rangle$, one for each prime divisor of $|g|$. Suppose that for some $i_0$ we have ${a^x_{i_0}}\not\in H$ for every $x\in G$, and let $|a_{i_0}|=q$. Let $\langle g_1\rangle $  be the Hall $q'$-subgroup of $\langle g\rangle$. We can assume that $g_1$ acts nontrivially on $G/R$, for otherwise $\langle g_1\rangle\leq \langle g_0\rangle$, and then  $H\cap \langle g\rangle\leq \langle g_0\rangle$ and we are done. The products of simple factors of $G/R$ over $g_1$-orbits are transitively permuted by $g$. Hence $g_1$ acts nontrivially on each such product, and the centralizer of such a product in $\langle g_1\rangle$ is equal to $\langle g_1\rangle\cap \langle g_0\rangle$. As above, since $G\not\leq H$, we can find a $g_1$-orbit on $\{S_i\}$ such that $H$ does not contain a minimal $g_1$-invariant subgroup $G_1$ of $G$ covering the product of the $S_i$ in this $g_1$-orbit.
Then
the hypotheses of the proposition hold for $G_1$, $g_1$, and $H_1=H\cap G_1\langle g_1\rangle$ in place of  $G$, $g$, and $H$. Indeed, conditions (1), (3) hold by construction, condition (4) is inherited from the same condition for $g$, and condition (2) follows  from the same condition for $g$ because an element in  $\langle g_1\rangle$ centralizing $G_1R/R$ also centralizes $G/R$. By induction we find $z\in G_1$ such that $H_1^{z}\cap \langle g_1\rangle\leq \langle g_0\rangle$, and therefore also
$$
H^{z}\cap \langle g_1\rangle =H^z\cap G_1\langle g_1\rangle\cap \langle g_1\rangle =(H\cap G_1\langle g_1\rangle )^z\cap \langle g_1\rangle =H_1^{z}\cap \langle g_1\rangle \leq \langle g_0\rangle.
 $$
 Since ${a^{z^{-1}}_{i_0}}\not\in H$ by our assumption, we obtain $H^{z}\cap \langle g\rangle\leq \langle g_0\rangle$. Thus, we can assume that for every $a_i$ there is $x\in G$ such that $a_i^x\in H$. Since $G\leq MH$, for every $i$ we can choose $m_i\in M$ such that $a_i^{m_i}\in H$.

We set
$$M_i=\{m\in M\mid {a^{m}_i}\in H\}.$$
Since $M$ is abelian, a direct calculation shows that $M_i=m_iC_M(a_i)$. Indeed,
if $a_i^m\in H$, then $a_i^m(a_i^{-1})^{m_i}\in M\cap H=1$, whence $a_i^{mm_i^{-1}}a_i^{-1}=1$, which means that $m\in m_iC_M(a_i)$. Thus, $M_i\subseteq m_iC_M(a_i)$. The reverse inclusion is obvious: $a_i^{mm_i}=a_i^{m_i}\in H$ for any $m\in C_M(a_i)$.

If there is  $z\in M\setminus \bigcup M_i$, then $a_i^z\not\in H$ for all $i$, so that $H^{z^{-1}}\cap \langle g\rangle =1$ and the proof is complete. Therefore we can assume that $M = \bigcup M_i$.

Suppose that $M=M_{i_0}$ for some $i_0$. Then $M=C_M(a_{i_0})$, which implies that $a_{i_0}\in H$ and, moreover, $a_{i_0}^x\in H$ for any $x\in  G\langle g\rangle $, since $G\leq MH$ and $g$ centralizes $a_{i_0}$. In other words,
$$
a_{i_0}\in K:=\bigcap _ {x\in G\langle g\rangle}H^x,
$$
where $K$ is a normal subgroup of $G\langle g\rangle$. It now follows that $a_{i_0}\in \langle g_0\rangle$: otherwise, $[G/R, a_{i_0}]=G/R$ and $[G,a_{i_0}]\leq [G,K]\leq K$, so that $K\cap G$ covers $G/R$ contrary to minimality of $G$, since $G\not\leq H\geq K$.

First suppose that $K_0=K\cap R\ne 1$. Then the hypotheses of the proposition hold for the images in $G\langle g\rangle /K_0$, which we denote by tilde. Indeed, $\tilde G/\tilde R\cong G/R$ and $C_{\langle \tilde g\rangle}(\tilde G/\tilde R)=\langle \tilde g_0\rangle$. Conditions (1), (3), (4) obviously hold since $K_0\leq R$. We check condition (2): if $ak=g^i$ for $a\in G$, $k\in K_0$, then $g^i\in \langle g_0\rangle$ by condition (2) for $G\langle  g\rangle$. Also, $\tilde G\not\leq \tilde H$ since $K_0\leq G\cap H$. By induction we find $\tilde z\in \tilde G$ such that $\tilde H^{\tilde z}\cap \langle \tilde g\rangle\leq \langle \tilde g_0\rangle$. Then for a pre-image $z\in G$ we have  $$H^{z}\cap \langle g\rangle\leq \langle  g_0\rangle K_0\cap \langle g\rangle\leq C_{\langle g\rangle}(G/R) = \langle g_0\rangle,$$
as required.

Thus we can assume that $K\cap R= 1$. Then $[a_{i_0}, G]\leq K\cap R=1$, so $a_{i_0}$ is central in $G\langle g\rangle$. We are going to apply induction to  $G\langle g\rangle /\langle a_{i_0}\rangle$, where  the images are denoted by tilde. Clearly,  $\tilde G/\tilde R\cong G/R$. We claim that  $C_{\langle \tilde g\rangle}(\tilde G/\tilde R)=\langle \tilde g_0\rangle$. Indeed, if $[G, g^i]\leq R    \langle a_{i_0}\rangle$, then  $[[G, g^i], g^i]\leq [R    \langle a_{i_0}\rangle,  g^i]\leq R $, whence $[G, g^i]\leq R  $, since then the image of $g^i$ in $G \langle g\rangle/ R$ belongs to the Fitting subgroup by Baer's theorem. We now verify the other hypotheses of the proposition. Condition (1): if $\tilde G_1\leq \tilde G$ for a $g$-invariant subgroup $\tilde G_1$ covering $\tilde G/\tilde R$, then for the full inverse image we have $G_1\leq G\langle a_{i_0}\rangle$. Then $[G_1,G_1]\leq [G\langle a_{i_0}\rangle, G\langle a_{i_0}\rangle]\leq G$. Since $[G_1,G_1]$ also  covers $G/R$, by minimality of $G$ we must have $[G_1,G_1]=G$, whence $\tilde G_1= \tilde G$. Condition (2): if $b=g^ia^j_{i_0}$ for $b\in G$, then $g^ia^j_{i_0}\in \langle g_0\rangle$ by condition (2) for $G\langle g\rangle$, whence $g^i\in \langle g_0\rangle$, since $a_{i_0}\in \langle g_0\rangle$. Conditions (3) and (4) obviously follow from the same conditions for $G \langle g\rangle$. Finally, $\tilde G\not\leq \tilde H$, since otherwise $G\leq H \langle a_{i_0}\rangle=H$, contrary to the hypothesis. By induction we find $z\in G$ such that $H^{z}\cap \langle g\rangle\leq \langle g_0\rangle \langle a_{i_0}\rangle =\langle g_0\rangle$.

Thus, we can assume that $M_i\ne M$ for every $i$.

Consider the subgroup $A=\langle {a^{m_1}_1},\dots , {a^{m_s}_s}\rangle\leq H$. Since $H\cap M=1$, we have $A\cong AM/M\cong \langle {a_1},\dots , {a_s}\rangle$, which is a cyclic group (a subgroup of $\langle g\rangle$). The subgroup $M$ is an elementary abelian $q$-group for some prime $q$. Let $B$ be a Hall $q'$-subgroup of  $A$ (possibly, $B=A$). In the semidirect product $MA=M\langle {a_1},\dots , {a_s}\rangle$, the Hall $q'$-subgroup of $\langle {a_1},\dots , {a_s}\rangle$, which is also a Hall $q'$-subgroup of $MA$, is  conjugate to  $B$ by an element $y\in M$.  Since $B\leq H$, this means that  for every $i$ such that $|a_i|\ne q$  we can replace all those elements $m_i$ by this element  $y\in m_iC_M(a_i)$. Without loss of generality, suppose that all the elements $a_2,\dots ,a_s$ are of order coprime to $q$, while $|a_1|$ may be equal to $q$, or not. If $|a_1|\ne q$, then
$$
M=yC_M(a_1)\cup yC_M(a_2)\cup \dots \cup yC_M(a_s),
$$
whence, after multiplying by $y^{-1}$,
\begin{equation}\label{e-m1}
M=C_M(a_1)\cup C_M(a_2)\cup \dots \cup C_M(a_s).
\end{equation}
If $|a_1|= q$, then
$$
M=m_1C_M(a_1)\cup yC_M(a_2)\cup \dots \cup yC_M(a_s),
$$
whence, after multiplying by $y^{-1}$,
\begin{equation}\label{e-m2}
M=y^{-1}m_1C_M(a_1)\cup C_M(a_2)\cup \dots \cup C_M(a_s).
\end{equation}
Suppose that $y^{-1}m_1C_M(a_1)\ne C_M(a_1)$. In this case, let $b=a_2\cdots a_s$. If  $b$ does not normalize the coset $y^{-1}m_1C_M(a_1)$, then \eqref{e-m2} implies
$$
(y^{-1}m_1C_M(a_1))^b\subseteq C_M(a_2)\cup \dots \cup C_M(a_s)
$$
and then also
$$
y^{-1}m_1C_M(a_1)\subseteq C_M(a_2)\cup \dots \cup C_M(a_s),
$$
since the right-hand side is $b$-invariant. Then
\begin{equation}\label{e-m3}
M=C_M(a_2)\cup \dots \cup C_M(a_s).
\end{equation}
If  $b$ normalizes the coset $y^{-1}m_1C_M(a_1)$, then, since the action of $b$ on $M$ is coprime, it follows that $y^{-1}m_1C_M(a_1)$ contains an element $m_0\in C_M(b)=\bigcap _{i=2}^sC_M(a_i)$. For any $x\in m_0C_M(a_1)=y^{-1}m_1C_M(a_1)\ne C_M(a_1)$ the element $m_0x$ is not in the coset $y^{-1}m_1C_M(a_1)$ and therefore $m_0x \in  C_M(a_i)$ for some $i\geq2$ by \eqref{e-m2}. Then $x\in m_0^{-1}  C_M(a_i)=C_M(a_i)$, so that again
$$
y^{-1}m_1C_M(a_1)\subseteq C_M(a_2)\cup \dots \cup C_M(a_s)
$$
and by \eqref{e-m2}
\begin{equation}\label{e-m4}
M=C_M(a_2)\cup \dots \cup C_M(a_s).
\end{equation}
As a result of \eqref{e-m1},\eqref{e-m3},\eqref{e-m4}, in all cases,
$$
M=C_M(a_1)\cup C_M(a_2)\cup \dots \cup C_M(a_s).
$$
Note also that the automorphism $a_1a_2\cdots a_s$ acts faithfully on $M$, as $M\ne C_M(a_i)$ for every~$i$. This situation is known to be impossible due to the following
well-known lemma, which we prove here for completeness.

\begin{lemma}
If $\alpha$  is an automorphism of an elementary abelian $q$-group $V$, then there is an element $v\in V$ such that $C_{\langle \alpha\rangle}(v)=1$.
\end{lemma}

\begin{proof}
Induction on $|V|$. If $|V|=q$, then every element in $\langle \alpha\rangle$ acts without non-trivial fixed points, and the result follows. In the general case, if an  element $\beta \in \langle \alpha\rangle$ of prime order $r\ne q$ has non-trivial fixed points, then $V=C_V(\beta )\times [V,\beta ]$ by Maschke's theorem, and both factors are $\alpha$-invariant and have smaller order than $V$. By induction, there are $v_1\in C_V(\beta )$ and $v_2\in [V,\beta ]$ such that $C_{\langle \alpha\rangle}(v_1)=C_{\langle \alpha\rangle}(C_V(\beta ))$ and $C_{\langle \alpha\rangle}(v_2)=C_{\langle \alpha\rangle}([V,\beta ])$. Then $C_{\langle \alpha\rangle}(v_1+v_2)= C_{\langle \alpha\rangle}(C_V(\beta )) \cap C_{\langle \alpha\rangle}([V,\beta ])=C_{\langle \alpha\rangle}(V)=1$.

Thus, we can assume that all elements of $\langle \alpha\rangle$ of order coprime to $q$ act on $V$ without nontrivial fixed points. It remains to choose an element of $V$ outside the centralizer of an element of $\langle \alpha\rangle$  of order $q$   (or any non-trivial element of $V$ if $q\nmid |\alpha |$).
\end{proof}

The proof of Proposition~\ref{prm} is complete.
\end{proof}

\section{Nonsoluble length}

In this section we prove Theorem~\ref{t3}. First we introduce some notation. Consider the `upper nonsoluble series' of $G$, which by definition starts from the soluble radical $R_0=R(G)$ and  the full inverse image $L_1$   of $F^*(G/R_0)$. Then by induction $R_j$ is the full inverse image of the soluble radical of $G/L_{j}$, and $L_{j+1}$ the full inverse image of $F^*(G/R_j)$. It is easy to see that the nonsoluble length $\lambda (G)$ is equal to the first positive integer $l$ such that $R_{l}=G$. In the normal series
\begin{equation}\label{e-riad}
1=L_0\leqslant R_0 <  L_1\leqslant R_1<  \dots \leqslant R_{l}=G
\end{equation}
each quotient $U_i=L_i/R_{i-1}$ is a (nontrivial) direct product of nonabelian simple groups, and each quotient $R_i/L_{i}$ is soluble (possibly trivial). By the well-known properties of the generalized Fitting subgroup,
if we write one of those  nonsoluble quotients as a direct product $U_j=S_1\times \dots \times S_v$ of nonabelian simple groups $S_i$, then the set of these factors $S_i$ is uniquely determined as the set of subnormal simple subgroups of $G/R_{j-1}$.
Acting by conjugation the group $G$ permutes these subnormal factors; for brevity we simply speak of orbits of elements of $G$ on $U_j$ meaning orbits in this permutational action. The stabilizer of a point $S_i$ can also be denoted as the normalizer $N_G(S_i)$ of the section $S_i$.

The subgroup $L_j/R_{j-1}$ contains its centralizer in $G/R_{j-1}$.  Let $K_j$ be the kernel of the permutational action of $G$ on $\{S_1, \dots , S_v\}$.  Clearly, $L_j\leq K_j$.  The quotient $K_j/L_j$  is soluble by the Schreier conjecture. Therefore, $K_j\leq R_{j}$. We shall routinely use these facts without special references.

Let $g$ be an element of a finite group $G$, and let $\{S_1,\dots, S_r\}$ be a $g$-orbit in $U_i$. We say that the orbit is \textit{pure} if the order of the automorphism of $S=S_1\cdots S_r$ induced by $g$ acting by conjugation is equal to $r$, which is the order of the permutation induced by $g$ on this orbit; in other words, if the stabilizer of a point in $\langle g\rangle$ acts trivially on $S$:  $N_{\langle g\rangle}(S_1)=C_{\langle g\rangle}(S)$.

We now prove two key technical propositions. It is convenient to introduce  the following hypothesis.

\begin{hyp}\label{h1} Let $g$ be an element of a finite group $G$,  and let $\{S_1,\dots, S_r\}$ be a $g$-orbit in a section $U_i=L_i/R_{i-1}$ of the series \eqref{e-riad} with $i\geq 2$. Suppose that $\langle g\rangle \cap L_i=1$,
    and  that  $g$ acting by conjugation induces a nontrivial automorphism $\bar g$ of $S=S_1\cdots S_r$. Let $t=|\bar g|$, so that $\langle  g^t \rangle$ is the centralizer of  $S$ in $\langle g \rangle$ and  $\langle \bar g \rangle =\langle g \rangle/\langle  g^t \rangle$.

Furthermore, let $\hat S$ be a minimal by inclusion $g$-invariant subgroup of $G$ such that $\hat SR_{i-1}/R_{i-1}=S$. If in addition $\{S_1,\dots, S_r\}$ is a pure $g$-orbit, then we choose  $\hat S$ to be also contained in $E_n(g)$, which is possible by Lemma~\ref{l2new}. Let $\hat S_1$ be a minimal by inclusion subgroup of $\hat S$ such that $\hat S_1R_{i-1}/R_{i-1}=S_1$.
\end{hyp}

The first of the technical propositions  provides a passage from a pure orbit of $g$ in $U_i$ to an orbit of at least the same length in the preceding section $U_{i-1}$.

\begin{proposition}\label{pr31} Assume Hypothesis~\ref{h1}.
   If $\{S_1,\dots, S_r\}$ is a pure $g$-orbit, then  $U_{i-1}$ contains a $g$-orbit $\{T_1, \dots , T_l\}$ of length divisible by $|\bar g|$ and for some choice of $\hat S_1$ there is an element $x_0\in \hat S_1\setminus R_{i-1}$
     that does not belong to $N_G(T_1)$.
     \end{proposition}

  \begin{proof}
Consider the permutational action of the group $\hat S\langle g \rangle$ on the set of simple factors of $U_{i-1} $. Since the kernel of this action is
contained in $R_{i-1}$,
 for an element $x_0\in \hat S_1\setminus R_{i-1}$ there is an orbit  $\{T_1,\dots , T_s\}$ such that the kernel $K$ of the restriction to this orbit does not contain $x_0$. In particular,  $\hat S\not\leq K$. Then $K$ cannot cover $S$ by minimality of $\hat S$. Since $K$ is a normal subgroup of $\hat S\langle g\rangle$, it follows that  $K\leq (\hat S\cap R_{i-1})\langle g^t\rangle$  (where, recall, $\langle g^t\rangle$ is the centralizer of $S$ in $\langle g\rangle$). In particular, $K\leq C_{\hat S\langle g\rangle}(S)$.

Let $W=\hat SK/K$
and let $R$ be the soluble radical  of $W$; then $W/R\cong S$ since $L_{j-1}\leq K$. Let $\tilde g$ be the image of $g$ in $\hat S\langle g\rangle/K$. We claim that  $C_{\langle \tilde g\rangle}(S)=\langle \tilde g^t\rangle$. Indeed, if $[\hat S, g^i]\leq (R_{i-1}\cap \hat S)K \leq  R_{i-1}   \langle g^t\rangle$, then  $[[\hat S, g^i], g^i]\leq R_{i-1} $, whence $[\hat S, g^i]\leq R_{i-1}  $, since then the image of $g^i$ in $\hat S \langle g\rangle/ (R_{i-1}\cap \hat S)$ belongs to the Fitting subgroup by Baer's theorem.

We consider the action of $W\langle \tilde g \rangle$ on the orbit $\{T_1,\dots , T_s\}$. Let $\tilde H$ be the stabilizer of $T_1$ in $W\langle \tilde g \rangle $.
Then $W\not \leq \tilde H$,
since $\hat S\not\leq K$.
We now show that the triple  $W$, $\tilde g$, $\tilde H$ satisfies the other hypotheses of Proposition~\ref{prm}.
 Condition (1): if $\tilde M\leq W $ for a $g$-invariant subgroup $\tilde M$ covering $S$, then for the full inverse image $M$ we have $M\leq \hat SK\leq \hat S\langle g^t\rangle$. Then $[M,M]\leq [\hat S\langle g^t\rangle, \hat S\langle g^t\rangle]\leq \hat S$. Since $[M,M]$ also covers $S$, by minimality of $\hat S$ we must have $[M,M]=\hat S$, whence $\tilde M=W$. Condition (2): if $ak=g^i$ for $a\in \hat S$, $k\in K$,  then $b=g^ig^{tj}$ for some $b\in \hat S$ and some $j$, because $K\leq (\hat S\cap R_{i-1})\langle g^t\rangle$. Since $\langle g\rangle\cap \hat S=1$ by Hypothesis~\ref{h1}, we obtain $b=g^ig^{tj}=1$, so that $g^i\in \langle g^t\rangle$ and $\tilde g^i\in C_{\langle \tilde g\rangle}(S)$. Conditions (3) and (4) clearly hold.

By Proposition~\ref{prm} there is $\tilde w\in W$ such that
$$
\tilde H^{\tilde w}\cap \langle \tilde g \rangle \leq \langle \tilde g^t \rangle.
$$
Let $H$ be the full inverse image of $\tilde H$ in $\hat S\langle g\rangle$, and $w$ some pre-image of $\tilde w$. Then
\begin{equation}\label{e-gw}
H^w\cap \langle g \rangle \leq \langle g^t \rangle K\cap \langle g \rangle \leq C_{\hat S\langle g\rangle}(S)\cap \langle g \rangle =\langle g ^t\rangle .
 \end{equation}
 Since $H$ is the stabilizer of $T_1$ in $\hat S\langle g\rangle$, this means that the $g$-orbit in $U_{i-1} $ containing $T_1^w$ has length divisible by $t=|\bar g|$.

Recall that $x_0\in \hat S_1\setminus R_{i-1}$ and $x_0\not\in K$.
There is $y\in  \hat S \langle g\rangle $ such that $x_0\not \in H^{wy}$. Since $y=g^js$ for some integer $j$ and some $s\in \hat S$, we have  $x_0\not \in H^{wy}=H^{wg^js}$, so that  $x_0^{s^{-1}}\not \in H^{wg^j}$. Thus the element $x_0^{s^{-1}}$, which  belongs to  $\hat S_1^{s^{-1}}\setminus R_{i-1}$, moves $T_1^{wg^j}$.  Note that $\hat S_1^{s^{-1}}$ is also a minimal subgroup of $\hat S$ covering $S_1$. Clearly, the  $g$-orbit containing $T_1^{wg^j}$ has the same length as the $g$-orbit
containing $T_1^{w}$, and this length divides  $|\bar g|$ by \eqref{e-gw}.
 It remains to rename $T_1^{wg^j}$
 by~$T_1$, \ $\hat S_1^{s^{-1}}$ by $\hat S_1$, and $x_0^{s^{-1}}$ by $x_0$. The proposition is proved.
  \end{proof}

  The next technical proposition  provides a passage from a non-pure orbit of $g$ in $U_i$ to an orbit in $U_{i-1}$ that is strictly greater with respect to the following ordering. Namely, on the set of positive integers we introduce the lexicographical order with respect to the exponents of primes in the canonical  prime-power decomposition: if
  $$
  a=2^{k_2}\cdot  3^{k_3}\cdot 5^{k_5}\cdot 7^{k_7} \cdots \qquad \text{and}\qquad
  b=2^{l_2}\cdot  3^{l_3}\cdot 5^{l_5}\cdot 7^{l_7} \cdots ,
    $$
  then by definition $a\prec b$ if, for some prime $p$, we have $k_q=l_q$ for all primes $q<p$ and $k_p<l_p$. Clearly, if $u$ is divisible by $v$, then $v\prec u$.

\begin{proposition}\label{pr32}  Assume Hypothesis~\ref{h1}.
   If $\{S_1,\dots, S_r\}$ is a non-pure $g$-orbit, then  $U_{i-1}$ contains a $g$-orbit $\{T_1, \dots , T_l\}$ of length strictly greater  than $r$ with respect to the order $\prec$.
\end{proposition}

\begin{proof} Let
$
|\bar g|=p_1^{\alpha _1}\cdot  p_2^{\alpha _2}\cdots $ for primes $p_1<p_2<\cdots $  and positive integers $\alpha _i$.
Let $\bar g_i$ be a generator of the Sylow $p_i$-subgroup of $\langle \bar g\rangle$, and let $ g_i$ be an inverse image of $\bar g_i$ in the Sylow $p_i$-subgroup of $\langle g\rangle$.

Since the orbit is non-pure, the stabilizer of a point $S_1$ in the permutational action of $\bar g$ on $\{S_1,\dots, S_r\}$ is nontrivial. Let the order of this stabilizer be
$p_1^{\beta  _1}\cdot  p_2^{\beta  _2}\cdots $
for the same primes  $p_i$ and non-negative  integers $\beta _i$, not all of which are zero. Let $j$ be the smallest index such that $\beta_j\geq 1$.
Consider the element
$$
f=g_1\cdots  g_{j-1}\cdot g_j
$$
(which generates the Hall $\{p_1,\dots , p_j\}$-subgroup of $\langle g\rangle$). Here, the case  ${j}=1$ is not excluded, when $f= g_1$.
Let $\bar f$ be the image of $f$ in $\langle g\rangle/\langle g^t\rangle$, so that $$|\bar f|= p_1^{\alpha _1}\cdots  p_j^{\alpha _j}.$$
 Let $t_0=|\bar f|$ for brevity.

Consider an orbit of $f$ in $\{S_1,\dots, S_r\}$, which we denote by $\{V_1,\dots , V_v\}$. Let $V=V_1\cdots  V_v$, and  let $\hat V$ be a minimal $f$-invariant subgroup of $\hat S$ such that $\hat VR_{i-1}/R_{i-1}=V$.
Let $\check f$ be the automorphism of $V$ induced by $f$.
Since the products of simple factors over $f$-orbits are permuted by $g$, the centralizer of $V$ in $\langle f\rangle $ is equal to  $\langle f\rangle\cap \langle g^t\rangle =\langle f^{t_0}\rangle$ (recall that $\langle g^t\rangle$ is the centralizer of $S$ in $\langle g\rangle$). Hence,
$$
|\check f|=|\bar f|=p_1^{\alpha _1}\cdots   p_{j-1}^{\alpha _{j-1}}\cdot p_{j}^{\alpha _j}.
$$
Note that by construction the stabilizer of a point $V_1$ in $\langle \check f\rangle $ is a $p_{j}$-subgroup.

  Consider the permutational action of the group $\hat V\langle f \rangle$ on the set of simple factors of $U_{i-1} $.
Since the kernel of this action is contained in $R_{i-1}$,
 we can choose an orbit $\{T_1,\dots , T_l\}$ for which the kernel $K$ of the restriction to this orbit does not contain some element $x\in \hat V_1\setminus R_{i-1}$. Then $K$ cannot cover $V$ by minimality of $\hat V$. Since $K$ is a normal subgroup of $\hat V\langle f\rangle$, it follows that  $K\leq (\hat V\cap R_{i-1})\langle f^{t_0}\rangle$. In particular,  $K\leq C_{\hat V\langle f\rangle}(V)$.

Let $W=\hat VK/K$ and let $R$ be the soluble radical  of $W$; then $W/R\cong V$, since $L_{j-1}\leq K$. Let $\tilde f$ be the image of $f$ in $\hat V\langle f\rangle/K$.  We claim that
$C_{\langle \tilde f\rangle}(V)=\langle \tilde f^{t_0}\rangle$. Indeed, if $[\hat V, f^i]\leq (R_{i-1}\cap \hat V)K \leq  R_{i-1}   \langle g^t\rangle$, then  $[[\hat V, f^i], f^i]\leq R_{i-1} $, whence $[\hat V, f^i]\leq R_{i-1}  $, since then the image of $f^i$ in $\hat V \langle f\rangle/ (R_{i-1}\cap \hat V)$ belongs to the Fitting subgroup by Baer's theorem.

We consider the action of $W\langle \tilde f \rangle$ on the orbit $\{T_1,\dots , T_l\}$. Let $\tilde H$ be the stabilizer of $T_1$ in $W\langle \tilde f \rangle $. Then $W\not\leq \tilde H$, since $\hat V\not\leq K$.

We now show that the triple  $W$, $\tilde f$, $\tilde H$ satisfies the other hypotheses of Proposition~\ref{prm}  in the role of $G$, $g$, $H$.
 Condition (1): if $\tilde M\leq W $ for an $f$-invariant subgroup $\tilde M$ covering $V$, then for the full inverse image $M$ we have $M\leq \hat VK\leq \hat S\langle f^{t_0}\rangle$. Then $[M,M]\leq [\hat V\langle f^{t_0}\rangle, \hat V\langle f^{t_0}\rangle]\leq \hat V$. Since $[M,M]$ also covers $V$, by minimality of $\hat V$ we must have $[M,M]=\hat V$, whence $\tilde M=W$. Condition (2): if $ak=f^i$ for $a\in \hat V$, $k\in K$,  then $b=f^if^{t_0j}$ for some $b\in \hat V$ and some $j$, because $K\leq (\hat V\cap R_{i-1})\langle f^{t_0}\rangle$. Since $\langle f\rangle\cap \hat V=1$ by Hypothesis~\ref{h1}, we obtain $b=f^if^{t_0j}=1$, so that $f^i\in \langle f^{t_0}\rangle$ and $\tilde f^i\in C_{\langle \tilde f\rangle}(V)$. Conditions (3) and (4) clearly hold by construction.

By Proposition~\ref{prm}
 there is $\tilde w\in W$ such that
$$
\tilde H^{\tilde w}\cap \langle \tilde f \rangle \leq \langle \tilde f^{t_0} \rangle.
$$
Let $H$ be the full inverse image of $\tilde H$ in $\hat V\langle f\rangle$, and $w$ some pre-image of $\tilde w$. Then
$$
H^w\cap \langle f \rangle \leq \langle f^{t_0} \rangle K\cap \langle f \rangle \leq C_{\hat V\langle f\rangle}(V)\cap \langle f \rangle =\langle f ^{t_0}\rangle .
 $$
 Since $H$ is the stabilizer of $T_1$ in $\hat V\langle f\rangle$,
this means that the $f$-orbit in $U_{i-1} $ containing $T_1^w$ has length divisible by
 $$
 |\check  f|=p_1^{\alpha _1}\cdots   p_{j-1}^{\alpha _{j-1}}\cdot p_{j}^{\alpha _j}.
$$
Then the $g$-orbit containing $T_1^w$ also has length divisible by
$$
p_1^{\alpha _1}\cdots   p_{j-1}^{\alpha _{j-1}}\cdot p_{j}^{\alpha _{j}}.
$$
This number is strictly greater with respect to our lexicographical order $\prec$ than the length of the original orbit
$$
p_1^{\alpha _1}\cdots   p_{j-1}^{\alpha _{j-1}}\cdot p_{j}^{\alpha _{j}-\beta _j}\cdots
$$
by the choice of $j$. It remains to rename $T_1^w$ by $T_1$. The proposition is proved.
\end{proof}

Theorem~\ref{t3}  will follow by induction from the following proposition. Recall that $K_i$ is the kernel
of the permutational action of $G$ on the set of simple factors of $U_i=L_i/R_{i-1}$.

\begin{proposition}\label{pr2}
Let $g$ be an element of a finite group $G$ whose order $|g|$ is equal to the product of $m$ primes counting multiplicities. Suppose that $\langle g\rangle \cap K_{ms}=1$ for some positive integer $s$. Then for any positive integer $n$ the nonsoluble length  of $E_n(g)$ is at least~$s$.
\end{proposition}

\begin{proof}
Consider the `upper nonsoluble series' for $E_n(g)$ constructed in the same way as \eqref{e-riad} was constructed for $G$, with its terms denoted by
\begin{equation}\label{e-rc}
1=\lambda _0\leqslant \rho _0< \lambda _1\leqslant \rho _1<  \dots \leqslant \rho _{e}=E_n(a),
\end{equation}
where $e=\lambda (E_n(g))$ is the nonsoluble length of $E_n(g)$.
Namely, $\rho_0=R(E_n(g))$ is the soluble radical,   $\lambda _1$ is  the full inverse image   of $F^*(E_n(g)/\rho_0)$, and by induction $\rho_j$ is the full inverse image of the soluble radical of $E_n(g)/\lambda _{j}$, and $\lambda _{j+1}$ the full inverse image of $F^*(E_n(g)/\rho_j)$. The quotients   $\lambda _i/\rho _{i-1}$ are (nontrivial) direct products of nonabelian simple groups, and the quotients $\rho _i/\lambda _{i}$ (possibly trivial) are soluble. Thus, $e$ is the first positive integer such that $\rho_{e}=E_n(g)$. Our task is to show that  $e\geq s$.

Since $\langle g\rangle \cap K_{ms}=1$, the element $g$ has at least one nontrivial orbit on the set of simple factors of $U_{ms}=L_{ms}/R_{ms-1}$, say, $\{S_1, \dots ,S_{r}\}$. If  the orbit is pure, then we apply Proposition~\ref{pr31}  to this orbit. If the orbit is not pure, then we apply Proposition~\ref{pr32}. Then we apply the same procedure to the orbit $\{T_1,\dots,T_l\}$ in $U_{ms-1}$ thus obtained: this orbit takes the role of the orbit $\{S_1, \dots ,S_{r}\}$ in Proposition~\ref{pr31} or \ref{pr32} depending on whether it is pure or not. We proceed with constructing this sequence of orbits, descending over the sections $U_i$ making $ms-1$ such steps. If we make such a step from a pure orbit by  Proposition~\ref{pr31}, then the length of the new orbit is divisible by the length of the old orbit and therefore does not decrease with respect to the order $\prec$. If we make such a step from a non-pure orbit by  Proposition~\ref{pr32}, then the length of the new orbit is strictly greater than the length of the old orbit with respect to the order $\prec$.

In the sequence of orbits thus constructed, some orbits may be pure, some not. We can visualize this fact as a sequence of $P$s (for pure) and $N$s (for non-pure), like
$$
P\to N\to P\to P\to P\to N\to N\to P\to N\to P\to
\cdots
$$
Importantly, in every passage of types $N\to N$ or $N\to P$ the length  of the orbit strictly increases with respect to the order $\prec$, while at passages of types $P\to N$ or $P\to P$ the length of the orbit does not decrease with respect to the order $\prec$. Therefore there can be at most $m-1$ passages of type $N\to N$ or $N\to P$, that is, at most $m-1$ occurrences of $N$ in this sequence. As a result, if the length of the sequence is at least $(s-1)m+(m-1)+1=ms$, then it will necessarily contain a subsequence
$$
P\to P\to  \cdots \to P
$$
of  $s$ pure orbits with $s-1$ consecutive passages $P\to P$. Let $i=t, t-1, \dots , t-s+1$ be the indices of the corresponding sections $U_i$.

Recall that if a $g$-orbit $\{S_1, \dots ,S_{r}\}$ in $U_i$ is pure, then by Lemma~\ref{l2new} the subgroup $S= S_1\times  \dots \times S_{r}$ is contained in the image of $E_n(g)$ in $G/R_{i-1}$ (since this image is obviously equal to the analogous subgroup $E_n(g)$ constructed for $G/R_{i-1}$).

The idea is to use each of these $s$ consecutive pure orbits  in $U_t, U_{t-1}, \dots , U_{t-s+1}$   to `mark' a nonsoluble factor of the series \eqref{e-rc} and prove that the factor `marked' by the pure orbit in $U_{i-1}$ is necessarily `lower' in \eqref{e-rc} than the factor marked by the pure orbit in $U_i$, for every $i=t, t-1,\dots  ,t-s+2$. Then the series \eqref{e-rc} must realize nonsoluble length at least $s$.

Thus, let $\{S_1, \dots ,S_{r}\}$ be a  pure $g$-orbit in $U_i$, and $\{T_1, \dots ,T_{l}\}$ the pure  $g$-orbit in $U_{i-1}$ obtained by Proposition~\ref{pr31}.
Recall that in  accordance with Hypothesis~\ref{h1}, $\hat S$ is a minimal by inclusion $g$-invariant subgroup of $E_n(g)$ such that $S=\hat SR_{i-1}/R_{i-1}$,  and $\hat S_1$ is a minimal by inclusion subgroup of $\hat S$ such that $\hat S_1R_{i-1}/R_{i-1}=S_1$. Note that since $S_1$ is nonabelian simple, $\hat S_1$ has no nontrivial soluble homomorphic images:
\begin{equation}\label{e-s1}
\hat S_1=[\hat S_1, \hat S_1].
\end{equation}
Recall also that by Proposition~\ref{pr31} we have an element $x_0\in \hat S_1\setminus R_{i-1}$
such that
\begin{equation}\label{e-11}
T_1^{x_0}\ne T_1.
\end{equation}

 Consider  the image of the series \eqref{e-rc} in $E_n(g)R_{i-1}/R_{i-1}$.  Since  $\hat S_1R_{i-1}/R_{i-1}\cong  S_1$ is a subnormal nonabelian simple  subgroup of   $E_n(g)R_{i-1}/R_{i-1}$, we obviously have a well-defined index $j$ such that
 \begin{equation}\label{e-mark1}
 \hat S_1\leq \lambda _jR_{i-1}\qquad\text{and}\qquad \hat S_1\not\leq \rho _{j-1}R_{i-1}.
 \end{equation}
  Note that then also
 \begin{equation} \label{e-lj}
 \hat S_1\leq \lambda _j(R_{i-1}\cap E_n(g)).
\end{equation}
It is also clear that the index $j$ depends only on $S_1$ (that is, it is independent of the choice of $\hat S_1$ such that $\hat S_1R_{i-1}/R_{i-1}=S_1$).

Since $\{T_1, \dots ,T_{l}\}$ is also a pure  $g$-orbit in $U_{i-1}$,  the  product $T= T_1 \cdots T_{l}$ is also covered by $E_n(g)$ by Lemma~\ref{l2new}. If $\hat T_1$ is  any subgroup of $E_n(g)$ such that $\hat T_1R_{i-2}/R_{i-2}=T_1$, then, again, there is a well-defined index $u$ depending only on $T_1$ such that
\begin{equation} \label{e2}
\hat T_1\leq \lambda _u(R_{i-2}\cap E_n(g))\qquad \text{and}\qquad
\hat T_1\not\leq \rho _{u-1}R_{i-2}.
\end{equation}

\begin{lemma}\label{l3}
Under the above hypotheses, $j>u$.
\end{lemma}

\begin{proof}
We argue by contradiction. Suppose that $j\leq u$. Then \eqref{e-lj} implies
that $\hat S_1\leq \lambda _u(R_{i-1}\cap E_n(g))$. The image of $\lambda _u/\rho _{u-1}$ in $E_n(g)/( R_{i-2}\cap E_n(g)$ is a direct product of nonabelian simple groups, one of which is $\bar T_1=\hat T_1/(\hat T_1\cap R_{i-2})\cong T_1$ by \eqref{e2}.  Acting by conjugation the group $E_n(g)$ permutes these factors. Consider the permutational action of $E_n(g)$ on the orbit containing $\bar T_1$. Clearly, $\lambda _u$ is contained in the kernel of this action. The subgroup $L_{i-1}\cap  E_n(g)$ normalizes $\hat T_1$ modulo $R_{i-2}\cap  E_n(g)$ (since $L_{i-1}$ normalizes $\hat T_1$ modulo $R_{i-2}$). As a normal subgroup of $E_n(g)$, then $L_{i-1}\cap  E_n(g)$ is contained in the kernel of that action on the orbit containing $\bar T_1$. Since $R_{i-1}/L_{i-1}$ is soluble, we obtain that the image of $\hat S_1\leq \lambda _u(R_{i-1}\cap E_n(g))$ in this action is soluble. But $\hat S_1 =[\hat S_1, \hat S_1]$  by the minimality of $\hat S_1$, as noted in \eqref{e-s1}. Therefore this image must actually be trivial; in particular,
\begin{equation}\label{e3}
\hat T_1^{x_0}\equiv \hat T_1 \,({\rm mod}\,\rho  _{u-1}(R_{i-2}\cap  E_n(g))).
\end{equation}

On the other hand, since $T_1^{x_0}\ne T_1$ by \eqref{e-11}, it follows that $[\hat T_1, \hat T_1^{x_0}]\leq R_{i-2}\cap E_n(g)$. Together with \eqref{e3} this implies
$$
\bar T_1=[\bar T_1,\bar T_1]\equiv 1  \,({\rm mod}\,\rho  _{u-1}(R_{i-2}\cap  E_n(g))),
$$
contrary to~\eqref{e2}.
\end{proof}

We now finish the proof of Proposition~\ref{pr2}. Each pure orbit $\{S_1,\dots ,S_r\}$ in $U_i$ for $i=t,\dots ,t-s+1$ in our sequence constructed by successive application  of Propositions~\ref{pr31} or \ref{pr32}  marks a nonsoluble quotient $\lambda _j/\rho _{j-1}$ of the series \eqref{e-rc} in the sense of \eqref{e-mark1}. By Lemma~\ref{l3} the next pure orbit in $U_{i-1}$ marks a strictly lower section. Therefore there must be at least $s$ different nonsoluble sections in \eqref{e-rc}, since their indices must be strictly descending as we go over the  $s$ consecutive pure orbits.
As a result, the nonsoluble length of $E_n(g)$ is at least $s$.
\end{proof}

\begin{proof}[Proof of Theorem~\ref{t3}] Recall that we have $\lambda (E_n(g))=k$ and $|g|$ is a product of $m$ primes counting multiplicities; we need to show that $g\in R_{(k+1)m(m+1)/2}(G)$. By Proposition~\ref{pr2} we have
$$
\langle g\rangle \cap R_{(k+1)m}\geq \langle g\rangle \cap K_{(k+1)m}\ne 1.
$$
The similar subgroup $E_n(\bar g)$ constructed for the image $\bar g$ of $g$ in $\bar G=G/R_{(k+1)m}$ is clearly the image of $E_n(g)$ and therefore its nonsoluble length is at most $k$. Then  $\bar g\in R_{(k+1)(m-1)m/2}(\bar G)$ by induction on $m$. The result follows, since $R_{(k+1)(m-1)m/2}(\bar G)$ is the image of $R_{(k+1)m(m+1)/2}$ in $G/R_{(k+1)m}$.
\end{proof}

\section{Generalized Fitting height}

\begin{proof}[Proof of Theorem~\ref{t2}]  Recall that $g$ is an element of a finite group $G$ whose order $|g|$ is equal to the product of $m$ primes counting multiplicities, and the generalized Fitting height of $E_n(g)$ is equal to $k$. We need to show $g$ belongs to  $ F^*_{((k+1)m(m+1)+2)(k+3)/2}(G)$.

As in the proof of Theorem~\ref{t-sol}, consider the subnormal $g$-invariant subgroups
$$
[...[[G,g],g],\dots ,g],
$$
 formed by taking successive commutator subgroups. Let $H$ be the smallest of the subgroups $[...[[G,g],g],\dots ,g]$. Note that if $H=1$, then $g$ is a left-Engel element and therefore $g\in F(G)$ and there is nothing to prove. In any case, $H=[H,g]$.

Let  $N=\langle H^G\rangle$  be the normal closure of $H$.
By construction, the image of $g$ in $G/N$ is a left-Engel element and therefore belongs to the Fitting subgroup of $G/N$. If we prove that the generalized Fitting height of $N$ is at most $h$ for some number $h$, then $N$, being  a normal subgroup, is contained in $F^*_{h}(G)$ and then $g\in F^*_{h+1}(G)$. Since $H$ is subnormal in $G$, by Lemma~\ref{l-sn}(b)
the generalized Fitting height
of its normal closure $N$ is the same as that  of $H$. Therefore it suffices to obtain the appropriate estimate of the generalized Fitting height of $H$.

Consider the group $H\langle g\rangle$. Since the nonsoluble length of $E_n(g)$ does not exceed its generalized Fitting height, by Theorem~\ref{t3} the element $g$ belongs to $R_{(k+1)m(m+1)/2}(G)$. Since $H=[H,g]$, it follows that $H\leq R_{(k+1)m(m+1)/2}(G)$, and since $H$ is a subnormal subgroup, $\lambda (H)\leq (k+1)m(m+1)/2$ by Lemma~\ref{l-sn}(a). It remains to obtain bounds for the Fitting height of the $(k+1)m(m+1)/2+1$ soluble factors of the upper nonsoluble series of $H$, which are $g$-invariant together with $H$.

Let $X=Y/Z$ be one of such soluble factors, where $Y$ and $Z$ are $g$-invariant and normal in $H$ and therefore subnormal in $G$.  Since $Y$ is subnormal, we have  $h^*(E_n(g)\cap Y)\leq h^*(E_n(g))=k$ by Lemma~\ref{l-sn}(a). Then the (soluble) image of $E_n(g)\cap Y$ in $Y/Z$ has Fitting height at most $k$. Since, clearly, $E_{Y,n}(g)\leq E_n(g)\cap Y$, we obtain that $E_{X,n}(g)$, which is the image of  $E_{Y,n}(g)$, also has Fitting height at most $k$.

Consider the `outer' semidirect product $X\rtimes \langle g\rangle$ (recall that both $Y$ and $Z$ are $g$-invariant, so this semidirect product is well defined). By applying Theorem~\ref{t-sol}
we obtain that $g\in F_{k+1}(X\rtimes \langle g\rangle )$. Therefore $[X, g]\leq F_{k+1}(X\rtimes \langle g\rangle )\cap X\leq F_{k+1}(X)$. In other words, $g$ acts trivially on $X/F_{k+1}(X)$. Since $H=[H,g]$, it follows that $H$ also acts trivially on $X/F_{k+1}(X)$, that is, $X/F_{k+1}(X)$ is a central section of $H$. In particular, $X=F_{k+2}(X)$, that is, the Fitting height of $X$ is at most $k+2$.

Thus, the generalized Fitting height of $H$ is at most
$((k+1)m(m+1)/2+1)(k+2)
+(k+1)m(m+1)/2=((k+1)m(m+1)+2)(k+3)/2 -1$, as required.
\end{proof}

\section{Final remarks and conjectures}
\label{s-rem}

We conjecture that the results of Theorems~\ref{t2} and \ref{t3} can be strengthened by removing the dependence on the order of the element. In fact, we have quite precise conjectures, with best-possible values. For generalized Fitting height we state the following.

\begin{conj}  \label{conj1}
Let $g$ be an element of a finite group $G$, and $n$ a positive integer. If the
generalized Fitting height of $E_n(g)$ is equal to $k$, then $g\in F^*_{k+1}(G)$.
\end{conj}

For nonsoluble length we state the following.
\begin{conj}  \label{conj2}
 Let $g$ be an element of a finite group $G$, and $n$ a positive integer. If the nonsoluble length of $E_n(g)$ is equal to $k$, then $g\in R_{k}(G)$.
     \end{conj}

As we show below, it is in this strongest form that Conjectures~\ref{conj1} and \ref{conj2} can be derived from an affirmative answer to the following question about automorphisms of direct products of nonabelian finite simple groups.

\begin{question}
\label{q1}
Let $S=S_1\times \dots \times S_r$ be a direct product of nonabelian finite simple groups, and $\varphi $  an automorphism of $S$ transitively permuting the factors.
Is it true that $E_{S,n}(\varphi )=S$?
\end{question}

Here, recall, $E_{S,n}(\varphi )=\langle [x,\underbrace{\varphi ,\dots ,\varphi }_{n}]\mid x\in S\rangle$,  where commutators are taken in the semidirect product $S\langle \varphi \rangle$. In Lemma~\ref{l2new} we obtained an affirmative answer to Question~\ref{q1} in the special case where the order of $\varphi$ is equal to the number of factors $r$, that is, when the stabilizer of a point in $\langle \varphi\rangle$ in the induced permutational action on $\{S_1,  \dots , S_r\}$ is trivial. However, in general Question~\ref{q1} seems rather difficult and remains open. The first step would be to consider the case of $\varphi$ of prime order, when of course the open question is about a single nonabelian finite simple group and its automorphism of prime order. A significant headway in this direction was recently made by Robert Guralnick (private communication).

The reduction of Conjectures~\ref{conj1} and \ref{conj2} to an affirmative answer to Question~\ref{q1}  can be conducted simultaneously based on the following proposition.

\begin{proposition}
\label{pr-red}
Let $\alpha$ be an automorphism
 of a finite group $G$ such that $G=[G,\alpha]$.
Assume that Question~\ref{q1} has an affirmative answer.  Then $E_{G,n}(\alpha )=G$.
\end{proposition}

\begin{proof}
Let $E=E_{G,n}(\alpha )$ for brevity.
Let $G$ be a counterexample of minimal order, and let $M$ be a minimal $\alpha$-invariant normal subgroup of $G$. Then $G=ME$ and $M\not\leq E$.

We need to consider two cases: $M$ can be an elementary abelian $q$-group for a prime $q$, or a direct product of isomorphic non-abelian simple groups.

If $M$ is an elementary abelian $q$-group, then the proof proceeds in exactly the same fashion as in the proof of Proposition~\ref{pr-sol} in the soluble case.
Then $E\cap M =1$ by minimality of $M$ because $M$ is abelian and $G=ME$, and so on.

Thus, let $M=S_1\times \dots \times S_r$, where the $S_i$ are isomorphic non-abelian simple groups. Suppose that $M\ne C_M(\alpha )$. By the affirmative answer to Question~\ref{q1} every product over an orbit of $\alpha $ in the permutational action on $\{S_1,  \dots,  S_r\} $ is contained in $E$ whenever $\alpha $ acts on this product nontrivially. Let $T$ be the product of those $S_i$ that are contained in $E$.  Then $T$ is normal in $E\langle \alpha\rangle $, since if $S_i\leq E$ and $x\in E\langle \alpha\rangle $, then $S_i^x\leq E$ and $S_i^x$ is again one of the $S_u$. Since $T$ is also normal in $M$, we obtain that $T$ is normal in $ME\langle \alpha\rangle=G\langle \alpha\rangle$. By minimality of $M$ then $M=T\leq E$, a contradiction. Thus, $M=C_M(\alpha)$. Since $C_G(\alpha ) $ normalizes $E$, then  $[E,M]\leq M\cap E$ and  $M\cap E$ is normal in $M$ and therefore in $ME\langle \alpha\rangle =G\langle \alpha\rangle$. Since $M\not\leq E$, by minimality of $M$ we have $M\cap E= 1$ and  therefore, $[E,M]=1$. As a result, $G = M \times E\langle \alpha\rangle$. This contradicts the hypothesis $G=[G,\alpha ]$.
\end{proof}

\begin{theorem}
Conjectures~\ref{conj1} and \ref{conj2} are true if Question~\ref{q1} has an affirmative answer.
\end{theorem}

\begin{proof}
As in the proofs of Theorems~\ref{t-sol} and \ref{t2},
consider the subnormal $g$-invariant subgroups
$$
[...[[G,g],g],\dots ,g],
$$
  and let $H$ be the smallest of these subgroups. Note that if $H=1$, then $g$ is a left-Engel element and therefore $g\in F(G)$ and there is nothing to prove. In any case, $H=[H,g]$.

Let  $N=\langle H^G\rangle$  be the normal closure of $H$.
By construction, the image of $g$ in $G/N$ is a left-Engel element and therefore belongs to the Fitting subgroup of $G/N$. If we prove that the generalized Fitting height of $N$ is at most $k$, then $N$, being  a normal subgroup, is contained in $F^*_{k}(G)$ and then $g\in F^*_{k+1}(G)$, as required. Similarly, if we prove that the nonsoluble length of $N$ is at most $k$, then as a normal subgroup $N$ is contained in $R_{k}(G)$ and then $g\in R_{k}(G)$, as required. Since $H$ is subnormal in $G$, by Lemma~\ref{l-sn}(b) the generalized Fitting height and the nonsoluble length
of its normal closure $N$ are the same as those  of $H$. Therefore it suffices to estimate these parameters of $H$. But $H=[H,g]\leq E_n(g)$ by Proposition~\ref{pr-red}.  Since $H$ is subnormal in $G$, the generalized Fitting height and the nonsoluble length of $H$ do not exceed the same parameters of $E_n(g)$.
 The result follows.
\end{proof}


\begin{thebibliography}{99}


\bibitem{ha-hi}  P. Hall and G. Higman, The $p$-length of a $p$-soluble group and reduction
theorems for Burnside's problem, \emph{Proc. London Math. Soc. (3)} {\bf
 6} (1956), 1--42.

\bibitem{hup} B.  Huppert, {\it Endliche Gruppen}. I,
Springer, Berlin, 1967.


\bibitem{khu-shu} {E. I. Khukhro} and  {P. Shumyatsky}, Words and pronilpotent subgroups in profinite groups, \emph{J.~Austral. Math. Soc.} {\bf  97}, no.~3 (2014), 343--364.

    \bibitem{wil83} J. Wilson, On the structure of compact torsion groups, {\it
Monatsh. Math.}, {\bf 96} (1983), 57--66.

\bibitem{zel90} E. I. Zelmanov, A solution of the Restricted
Burnside Problem for
groups of odd exponent, \emph{Izv. Akad. Nauk SSSR Ser. Mat.}, {\bf
54}, 42--59; English translation: \emph{Math. USSR Izvestiya},
{\bf
36} (1991),
41--60.

\bibitem{zel91} E. I. Zelmanov,
 A solution of the Restricted
Burnside Problem for
2-groups, \emph{Mat. Sbornik}, {\bf 182}, 568--592; English translation: \emph{Math. USSR Sbornik}, {\bf 72}
(1992), 543--565.

\bibitem{zel92} E. I. Zelmanov, On periodic compact groups, \emph{ Israel J. Math.} {\bf 77}, no.~1--2 (1992), 83--95.

\end{thebibliography}
\end{document}